\documentclass[11pt]{amsart}
\usepackage{graphicx,color,ulem,overpic,amssymb,hyperref}
\usepackage{graphics}
\usepackage{amsmath}
\usepackage{amscd}
\usepackage{latexsym}
\usepackage[all]{xy}
\begin{document}
\textwidth 5.5in
\textheight 8.3in
\evensidemargin .75in
\oddsidemargin.75in
\newtheorem{quest}{Question}[section]
\newtheorem{thm}[quest]{Theorem}
\newtheorem{lem}[quest]{Lemma}
\newtheorem{defi}[quest]{Definition}
\newtheorem{conj}[quest]{Conjecture}
\newtheorem{cor}[quest]{Corollary}
\newtheorem{prop}[quest]{Proposition}
\newtheorem{prob}[quest]{Problem}
\newtheorem{claim}[quest]{Claim}
\newtheorem{exm}[quest]{Example}
\newtheorem{cond}{Condition}
\newtheorem{rmk}[quest]{Remark}
\newtheorem{que}[quest]{Question}
\newcommand{\p}[3]{\Phi_{p,#1}^{#2}(#3)}
\def\tu{\widetilde{\Upsilon}}
\def\Z{\mathbb Z}
\def\N{\mathbb N}
\def\C{\mathcal{C}}
\def\D{\mathcal{D}}
\def\R{\mathbb R}
\def\g{\overline{g}}
\def\odots{\reflectbox{\text{$\ddots$}}}
\newcommand{\tg}{\overline{g}}
\def\ee{\epsilon_1'}
\def\ef{\epsilon_2'}
\title{
Pochette surgery of 4-sphere}
\author{Tatsumasa Suzuki and Motoo Tange}
\thanks{The first author is supported by JST SPRING, Grant Number JPMJSP2106}
\thanks{The second author is supported by JSPS KAKENHI, Grant Number 21K03216}
\subjclass{57R65,57K40,57K45}
\keywords{4-manifolds, pochette surgery, handle calculus}

\address{Department of Mathematics, Tokyo Institute of Technology,
2-12-1 Ookayama, Meguro-ku, Tokyo 152-8551, Japan}
\email{suzuki.t.do@m.titech.ac.jp}

\address{Institute of Mathematics, University of Tsukuba,
 1-1-1 Tennodai, Tsukuba, Ibaraki 305-8571, Japan}
\email{tange@math.tsukuba.ac.jp}
\date{\today}
\maketitle
\begin{abstract}
Iwase and Matsumoto defined `pochette surgery' as a cut-and-paste on 4-manifolds along a 4-manifold homotopy equivalent to $S^2\vee S^1$. 
The first author in \cite{S2} studied infinitely many homotopy 4-spheres obtained by pochette surgery.
In this paper we compute the homology of pochette surgery of any homology 4-sphere by using `linking number' of a pochette embedding.
We prove that pochette surgery with the trivial cord does not change the diffeomorphism type or gives a Gluck surgery. 
We also show that there exist pochette surgeries on the 4-sphere with a non-trivial core sphere and a non-trivial cord such that the surgeries give the 4-sphere.
\end{abstract}
  \section{Introduction}
  \label{intro}
\subsection{Pochette surgery}
Let $D^n$ be an $n$-dimensional disk and $S^n$ an $n$-dimensional sphere.
Throughout this paper, all manifolds are assumed smooth, connected and oriented, and all maps are smooth. 
For a manifold $M$, the open tubular neighborhood for a submanifold $A\subset M$ is denoted by $N(A)$.
Let $E(X)$ denote the exterior $M-N(X)$ of a submanifold $X$ in $M$.


Here we define pochette surgery, which was initially defined by Iwase and Matsumoto in \cite{IM}.
Let $P$ denote the boundary-sum $S^1\times D^3\natural D^2\times S^2$.
It is called a {\it pochette}.
Let $e$ be an embedding $P\hookrightarrow M$ in a 4-manifold $M$.
Let $Q_e$ denote the image $e(Q)$ of a submanifold $Q$ in $P$.

We take a diffeomorphism $g:\partial P\to \partial E(P_e)$.
We call an operation $M(e,g):=E(P_e)\cup_gP$ a {\it pochette surgery}.
Here we say that the diffeomorphism $g:\partial P\to \partial (E(P_e))$ is a {\it gluing map} for the pochette surgery.

We call the curves $l:=S^1\times \{\text{pt}\}$ and $m:=\partial D^2\times \{\text{pt}\}$ on $\partial P$ a {\it longitude} and a {\it meridian} of $P$, respectively.
According to \cite[Theorem 2]{IM}, the diffeomorphism type of $M(e,g)$ is uniquely determined by the following data:

\begin{itemize}
\item[(i)] an embedding $e: P\hookrightarrow M$,
\item[(ii)] a homology class $g_\ast([m])\in H_1(\partial E(P_e))={\mathbb Z}[m_e]\oplus {\mathbb Z}[l_e]$,
\item[(iii)] a mod $2$ framing $\epsilon$ around $g(m)$.
\end{itemize}
The mod 2 framing will be defined in Section~\ref{mod2f}.
The induced map $g_\ast$ maps the primitive element $[m]$ in $H_1(\partial P)$ to $p[m_e]+q[l_e]$ in $H_1(\partial E(P_e))$, where $p,q$ are relatively prime integers.
Then, we call the element $p/q\in\mathbb{Q}\cup\{\infty\}$ a {\it slope} of the pochette surgery.
Any slope $p/q$ gives an unoriented image $g(m)$ of $m$.
Hence, for some embedding $e$, the pochette surgery with the slope $p/q$ and the mod 2 framing $\epsilon$ is called $(p/q,\epsilon)$-{\it pochette surgery} of $M$ and this denotes $M(e, p/q, \epsilon)$.
We call the 2-sphere $S:=\{\text{pt}\}\times S^2\subset P$ a {\it core sphere} of $P$ and the meridian 2-sphere $B:=\{\text{pt}\}\times \partial D^3\subset P$ a {\it belt sphere} of $P$.

Let $S'$ be an embedded sphere with a product neighborhood in a 4-manifold $M$.
{\it Gluck surgery} along $S'$ is an operation $Gl(S'):=E(S')\cup_\varphi(D^2\times S^2)$, where $\varphi$ is a diffeomorphism $\partial D^2\times S^2\to  \partial N(S')\cong S^1\times S^2$ which is not homotopy equivalent to the identity.
From the construction, any $(\infty,0)$-pochette surgery is the trivial surgery and any $(\infty,1)$-pochette surgery yields $Gl(S_e)$.
In the case of $(0,\epsilon)$-pochette surgery, it is an operation $E(l_e)\cup (D^2\times S^2)$ along the curve $l_e\subset M$.
This surgery means that the result is one side of the manifold obtained by attaching 5-dimensional 2-handle on $M\times I$ along $l_e$.
We call the result an {\it $S^1$-surgery}.
Thus, any pochette surgery with the slope general rational $p/q$ can be regarded as an intermediate between a Gluck surgery and an $S^1$-surgery.

In \cite{M}, Murase studied pochette surgeries of the double of $P$. 
Let $DP$ be the double $P\cup_{\rm id}(-P)=S^1\times S^3\# S^2\times S^2$ of $P$.
Let $i_P$ be the inclusion map $i_P:P\rightarrow DP$.
According to \cite{M}, the manifold $DP(i_P,p/q,\epsilon)$ is diffeomorphic to a Pao manifold \cite{P}. 
In other words, pochette surgery can be used to reduce the Betti numbers. 

Pochette surgery may include interesting 4-manifolds, possibly exotic 4-spheres and so on.
In this article, we focus on pochette surgeries yielding homotopy 4-sphere.

\subsection{Aims of this paper and future works}
A main aim of this paper is to investigate pochette surgeries $M(e,g)$ yielding homotopy 4-spheres and to determine the diffeomorphism types.

First, Theorem~\ref{main} clarifies pochette surgeries $M(e,g)$ of the case where the cord is trivial is diffeomorphic to $M$ or some Gluck surgery.
The cord is defined in the next section.
In Theorem~\ref{main2}, by using this result, we give a sufficient condition that any pochette surgery of $M$ for some core sphere gives the same manifold $M$ or a Gluck surgery.
As a particular condition, any $(1/q,\epsilon)$-pochette surgery of 4-sphere whose core sphere is the unknot is diffeomorphic to $S^4$.
The first author in 
\cite{S2} proved that several examples of infinitely many homotopy $4$-spheres with the trivial core sphere are all diffeomorphic to the standard 4-sphere.
Theorem~\ref{main2} is a generalization of his result.

The second aim is to clarify diffeomorphism types of pochette surgeries in the cases of non-trivial cord.
To do so, we compute the homology of $M(e,g)$ for any homology 4-sphere $M$.
In order to compute the homology, we need to introduce the notion of a linking number for an embedding of a pochette as well as the slope which was defined by Iwase and Matsumoto \cite{IM}.
The linking number of an embedded pochette is the linking number of the embedded core sphere $S_e$ and the longitude $l_e$ in $M$.
It depends on the choice of a meridian $m$, a longitude $l$ and an embedding $e: P\hookrightarrow M$.
Actually, we show that the homology of a pochette surgery is uniquely determined by the slope and the linking number (Proposition~\ref{homology pochette surgery of S4}).

In Section~\ref{examples}, we investigate cases where the core sphere $S_e$ is a non-trivial 2-knot and the cord is a non-trivial (Theorem~\ref{intro:1-fusion}).
These surgeries give the standard 4-sphere.
Actually, we use a ribbon 2-knot of 1-fusion as $S_e$.
The proof is essentially proven in Theorem~\ref{non-trivial knot and cord}.
We generalize this situation to some cases where the core spheres are any general non-trivial ribbon 2-knots $S$ with $\pi_1(E(S_e))\not\cong {\mathbb Z}$ (Theorem~\ref{intro:n-fusion}).
In each case, we give a non-trivial cord such that any surgery yielding homotopy 4-sphere gives the standard $S^4$.
The essential part for this theorem will be proven in Theorem~\ref{non-trivial knot and cord2}.

Pochette surgery can easily extend to a surgery along $\natural^aS^1\times D^3\natural^bD^2\times S^2$ for some positive integers $a,b$.
This is called {\it outer surgery} by \cite{Nak}.
In the future, we expect to find many exotic 4-manifolds by pochette surgery or outer surgery.
See Section~\ref{questions} for questions for pochette surgery or outer surgery.

\subsection{Pochette surgery with the trivial cord.}
Consider $P$ as $D^2\times S^2\cup h^1$, where $h^1$ is a 1-handle.
In order to embed $P$ into a 4-manifold $M$, we have only to determine an embedding of $D^2\times S^2$ and the 1-handle $h^1$.
This gives a properly embedded, simple arc in $E(D^2\times S^2)$ by taking the core of $h^1$.
We call this arc a {\it cord} here.
If a cord is boundary parallel, then the cord is called {\it trivial}.

In this paper, we clarify diffeomorphism types of pochette surgeries of closed 4-manifolds with the trivial cord.
As a corollary, we clarify the diffeomorphism type of any pochette surgery on homology 4-sphere with the complement of the core sphere homotopically trivial.
Here we state the main theorems.

\begin{thm}
\label{main}
Let $e$ be an embedding of $P$ in a closed 4-manifold $M$ with the trivial cord.
Then for any integer $q$, the following holds:
$$M(e,1/q,\epsilon)\cong \begin{cases}M&\epsilon=0\\ Gl(S_e)&\epsilon=1.\end{cases}$$
\end{thm}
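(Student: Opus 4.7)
The strategy is to reduce the $(1/q,\epsilon)$-pochette surgery with trivial cord to the known $(\infty,\epsilon)$-case via a Kirby calculus argument: since the $(\infty,0)$-surgery is the trivial surgery yielding $M$ and the $(\infty,1)$-surgery is the Gluck surgery $Gl(S_e)$ (as noted in the introduction), this identification suffices.

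I would first fix a handle decomposition of $M$ in which $P_e$ appears as a sub-handlebody, consisting of a $0$-handle, the cord $1$-handle $h^1_\gamma := N(\gamma)$, and the core-sphere $2$-handle $h^2_S := N(S_e)$, together with further handles assembling $E(P_e)$. The hypothesis of trivial cord says that $\gamma$ is isotopic rel endpoints to an arc $\gamma' \subset \partial N(S_e)$; the disk $D \subset E(S_e)$ cobounded by $\gamma \cup \gamma'$, pushed slightly off the boundary and equipped with the framing from its normal bundle, then yields a $2$-handle $h^2_D$ whose attaching circle meets $h^1_\gamma$ geometrically once, so that $(h^1_\gamma, h^2_D)$ is a cancelable $1/2$-pair in the handle diagram of $M$.

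Next, I would describe $M(e,1/q,\epsilon)$ as the result of modifying $h^2_S$ so that its attaching circle represents $[m_e] + q[l_e]$ in $H_1(\partial E(P_e))$ and its framing is shifted by $\epsilon$ modulo $2$. Introducing $h^2_D$ into the diagram (together with an auxiliary canceling $3$-handle if necessary, a Kirby move that leaves $M$ unchanged), I perform $q$ handle slides of the modified $h^2_S$ over $h^2_D$; each such slide reduces the geometric intersection with $h^1_\gamma$ by one and shifts the framing by an even integer, so the mod $2$ framing $\epsilon$ is preserved throughout. After $q$ slides the attaching circle is homologous to $[m_e]$ and disjoint from $h^1_\gamma$, so $(h^1_\gamma, h^2_D)$ can be canceled; what remains is precisely the diagram of the $(\infty,\epsilon)$-pochette surgery on $(M,e)$, giving $M$ when $\epsilon=0$ and $Gl(S_e)$ when $\epsilon=1$.

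The main technical obstacle lies in the Kirby bookkeeping: one must verify that the disk $D$ provides a framing for $h^2_D$ making it a canceling partner of $h^1_\gamma$ in such a way that handle slides over $h^2_D$ change framings by even integers (so the mod $2$ framing introduced in Section~\ref{mod2f} is preserved), and one must handle the auxiliary $3$-handle introduced to insert $h^2_D$ cleanly so that the resulting diagram genuinely matches that of the $(\infty,\epsilon)$-surgery. These are routine but delicate handle-calculus manipulations.
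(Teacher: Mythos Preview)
Your strategy is the same as the paper's: exploit the trivial cord to reduce the $(1/q,\epsilon)$-surgery to the $(\infty,\epsilon)$-surgery via handle moves. However, your setup mixes two dual handle pictures in a way that makes the key step ill-posed as written.

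You place $P_e=D^4\cup h^1_\gamma\cup h^2_S$ at the \emph{bottom} of a decomposition of $M$, and then assert that $M(e,1/q,\epsilon)$ is obtained by ``modifying $h^2_S$ so that its attaching circle represents $[m_e]+q[l_e]$ in $H_1(\partial E(P_e))$''. But the attaching circle of $h^2_S$ lies in $\partial(D^4\cup h^1_\gamma)\cong S^1\times S^2$, not in $\partial E(P_e)$. From your bottom-up viewpoint, pochette surgery alters how the handles of $E(P_e)$ attach to $\partial P$, not how $h^2_S$ attaches; the curve in $\partial E(P_e)$ representing $[m_e]+q[l_e]$ is the image under $g$ of the \emph{belt} circle $m$ of $h^2_S$, not its attaching circle. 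Consequently your ``$q$ slides of the modified $h^2_S$ over $h^2_D$'' do not literally make sense, and the bookkeeping with the auxiliary $3$-handle you flag as ``routine'' is in fact where the argument lives.

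The paper resolves this by turning the picture upside-down and building $M(e,1/q,\epsilon)$ on top of $E(P_e)$. Because the cord is trivial, $E(P_e)$ is $E(S_e)$ with a separated $0$-framed $2$-handle adjoined (this $2$-handle is the dual incarnation of your $h^2_D$). Then $M(e,1/q,\epsilon)=E(P_e)\cup h^2(\epsilon)\cup h^3\cup h^4$, with $h^2(\epsilon)$ attached along $g(m)$. The paper shows, by slides in the surgered boundary $3$-manifold, that the separated $0$-framed $2$-handle sits on an unknot in $\partial\bigl(E(S_e)\cup h^2(\epsilon)\bigr)$ and hence cancels against the $3$-handle; what remains is $E(S_e)\cup h^2(\epsilon)\cup h^4$, i.e.\ $M$ or $Gl(S_e)$. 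Your disk $D$, the canceling pair, and the $q$ slides are exactly the dual of this, and the argument becomes correct once you adopt the $E(P_e)$-at-bottom perspective.
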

This theorem determines diffeomorphism types of $(1/q,\epsilon)$-pochette surgeries with the trivial cord. 
This theorem generalizes previous Okawa's result.
\begin{thm}[Okawa \cite{O}]
\label{okawa}
Let $e$ be an embedding with the cord trivial.
If the core sphere $S_e$ is a ribbon 2-knot, then any pochette surgery $S^4(e,1/q,\epsilon)$ is diffeomorphic to $S^4$ for any integer $q$.
is diffeomorphic to $S^4$ for any integer $q$.
\end{thm}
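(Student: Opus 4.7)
The plan is to reduce Okawa's statement directly to Theorem~\ref{main} together with the classical fact that Gluck surgery on a ribbon 2-knot in $S^4$ produces $S^4$. Applying Theorem~\ref{main} with $M=S^4$, and using the hypothesis that the cord of $e$ is trivial, one immediately obtains
$$S^4(e,1/q,\epsilon)\cong \begin{cases}S^4 & \epsilon=0\\ Gl(S_e) & \epsilon=1.\end{cases}$$
The case $\epsilon=0$ is then done, so the remaining content is to show $Gl(S_e)\cong S^4$ whenever $S_e$ is a ribbon 2-knot.

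For this remaining case, I would exploit the ribbon structure of $S_e$. A ribbon 2-knot is, by definition, the boundary of an immersed 3-disk in $S^4$ whose singular set consists only of transverse ribbon intersections; equivalently, $S_e$ is obtained from a trivial 2-link of unknotted 2-spheres by attaching fusion bands. Such a ribbon presentation yields an explicit handle decomposition of $E(S_e)$ with one 0-handle, one 1-handle per fusion band, and corresponding 2-handles. The Gluck twist replaces the standard gluing of the re-attached $D^2\times S^2$ by one differing by the nontrivial element of $\pi_0(\mathrm{Diff}(S^1\times S^2))$, which geometrically corresponds to a full rotation of the normal 2-disk; I would then slide the new $D^2\times\{\mathrm{pt}\}$ across the 2-handles corresponding to the fusion bands and use the ribbon disks as geometric duals to cancel the resulting 1-/2-handle pairs, recovering the standard handle decomposition of $S^4$.

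The main obstacle, and the nontrivial geometric content of the argument, lies in that final handle-calculus step: verifying that the Gluck twist is neutralized by slides along the ribbon 2-handles. This triviality of Gluck surgery on ribbon 2-knots is a classical result (cf. Gluck's original paper and Gordon's treatment of the Gluck construction for ribbon 2-knots), and given that the present paper's emphasis is Theorem~\ref{main}, it is natural to invoke the known fact rather than reprove it in full.
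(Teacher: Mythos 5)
Your proposal is correct and follows exactly the paper's own route: the paper likewise deduces Theorem~\ref{okawa} by applying Theorem~\ref{main} and then invoking the known fact that Gluck surgery along any ribbon 2-knot yields the standard $S^4$ (citing \cite{GS}). Your extra sketch of why that classical fact holds is a reasonable elaboration but not something the paper reproves either.
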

The Gluck surgery along any ribbon 2-knot is diffeomorphic to the standard 4-sphere, for example, see \cite{GS} about this fact.
Hence, Theorem~\ref{main} implies Theorem~\ref{okawa}.

Theorem \ref{main} immediately leads to the following theorem:
\begin{thm}
\label{main2}
Let $M$ be a homology 4-sphere.
Let $e$ be an embedding $P\hookrightarrow M$ with $\pi_1(E(S_e))={\mathbb Z}$.
If a pochette surgery produces a homology 4-sphere, 
then the result is 
diffeomorphic to $M$ or $Gl(S_e)$.
In particular suppose $M$ is $S^4$ and $e:P\hookrightarrow S^4$ is an embedding that the core sphere $S_e$ is the unknot.
Then if a pochette surgery by $e$ yields a homology 4-sphere $M'$, then $M'$ is diffeomorphic to $S^4$.
\end{thm}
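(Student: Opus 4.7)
The strategy is to reduce Theorem~\ref{main2} to Theorem~\ref{main} by arguing that the hypothesis $\pi_1(E(S_e))=\Z$ forces the cord to be trivial, and that the ``homology $4$-sphere'' conclusion forces the slope to be of the form $1/q$.

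For the triviality of the cord: by Alexander duality $H_1(E(S_e))\cong\Z$ is generated by a meridian of $S_e$, which lies in $\partial N(S_e)$; hence under $\pi_1(E(S_e))\cong\Z$ the inclusion-induced map $\pi_1(\partial N(S_e))\to\pi_1(E(S_e))$ is surjective. The long exact sequence of the pair $(E(S_e),\partial N(S_e))$ then shows that the cord (a properly embedded arc with endpoints in $\partial N(S_e)$) is null-homotopic rel $\partial$, and since arcs have codimension $3$ in a $4$-manifold, general position promotes this null-homotopy to an ambient isotopy carrying the cord into $\partial N(S_e)$. After isotoping $e$ accordingly, the cord is trivial and the $1$-handle lies in a collar of $\partial N(S_e)$; the longitude $l_e$ then bounds the cocore of the $1$-handle in $E(S_e)$, so the linking number $\ell=\mathrm{lk}(S_e,l_e)$ vanishes.

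By Proposition~\ref{homology pochette surgery of S4}, $H_*(M(e,g))$ is determined by the slope $p/q$ and $\ell$. With $\ell=0$, the condition that $M(e,g)$ is a homology $4$-sphere forces $p=\pm 1$, i.e.\ the slope has the form $1/q$ for some integer $q$. Theorem~\ref{main} then gives $M(e,g)\cong M$ when $\epsilon=0$ and $M(e,g)\cong Gl(S_e)$ when $\epsilon=1$. For the special case $M=S^4$ with $S_e$ the unknot, $E(S_e)\cong S^1\times D^3$ has $\pi_1=\Z$, so the first part applies; since the unknot is (trivially) a ribbon $2$-knot, $Gl(S_e)\cong S^4$ by the fact recalled after Theorem~\ref{okawa}, and hence $M'\cong S^4$ in either case.

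The main obstacle I anticipate is the cord trivialization: promoting a relative null-homotopy to an ambient isotopy of the embedded pochette requires some care, though the codimension count makes it essentially automatic. The subsequent homological step is a direct application of Proposition~\ref{homology pochette surgery of S4}.
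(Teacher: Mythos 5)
Your overall strategy is the same as the paper's: use $\pi_1(E(S_e))\cong\mathbb{Z}$ to trivialize the cord, normalize the linking number to $0$, deduce from Proposition~\ref{homology pochette surgery of S4} that the slope is $1/q$, and invoke Theorem~\ref{main}. The cord-trivialization step is fine and is exactly the paper's argument (the paper cites its double-coset lemma, under which $G(S_e)=\langle m\rangle$ forces a single isotopy class of cords; your homotopy-exact-sequence argument plus ``homotopy implies isotopy for arcs in dimension $4$'' is the same content).

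The one step whose justification is wrong is the claim that, once the $1$-handle lies in a collar of $\partial N(S_e)$, ``the longitude $l_e$ bounds the cocore of the $1$-handle, so $\ell$ vanishes.'' The cocore of a $4$-dimensional $1$-handle is a $3$-disk, not a surface bounded by $l_e$, and more importantly $\ell$ does \emph{not} vanish automatically for a trivial cord: $l_e$ closes up through an arc in $\partial N(S_e)\cong S^2\times S^1$, and that arc may wind any number $k$ of times in the meridian direction, giving $[l_e]=k[m_e]$ in $H_1(E(S_e))$. The paper itself warns that the linking number is not an invariant of the embedding, and its proof performs an explicit normalization: it isotopes the embedded $1$-handle around the meridian $\partial D^2\times\{\mathrm{pt}\}$ (equivalently, re-chooses the longitude) to force $\ell=0$, and only then does the homology-sphere condition $|p+q\ell|=1$ force the slope to be $1/q$ \emph{in the normalized coordinates}. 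Without this normalization your deduction ``$\ell=0$, hence $p=\pm1$'' fails for an embedding with trivial cord but $\ell=k\neq 0$, since $|p+qk|=1$ does not force $p=\pm1$. The fix is exactly the paper's isotopy, after which the rest of your argument (including the special case of the unknot, where $Gl(S_e)\cong S^4$ because the unknot is ribbon) goes through verbatim.
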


\subsection{Several pochette surgeries with non-trivial core sphere and cord}
Next, we consider several examples of pochette surgeries with non-trivial core sphere and cord.
\begin{thm}
\label{intro:1-fusion}
There exists a pochette embedding $e:P\hookrightarrow S^4$ with a non-trivial core sphere and a non-trivial cord such that 
the pochette surgery $S^4(e,g)$ is diffeomorphic to $S^4$.
\end{thm}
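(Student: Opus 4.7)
The plan is to exhibit explicit data satisfying the statement: a $1$-fusion ribbon $2$-knot as core sphere, a cord that threads the ribbon band, and a choice of slope and mod-$2$ framing for which the resulting pochette surgery is the standard $4$-sphere. The full argument is to be reduced to Theorem~\ref{non-trivial knot and cord}, so the task is to verify that the proposed pochette data fall within its hypotheses.

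First I would fix a $1$-fusion ribbon $2$-knot $K \subset S^4$, for instance the simplest non-trivial example, and select a ribbon disk $D$ bounded by $K$ consisting of two embedded $2$-disks joined by a single ribbon band. I would then choose a cord $\gamma$ in $E(N(K))$ whose interior pierces the ribbon band of $D$ once. The pochette embedding $e\colon P \hookrightarrow S^4$ is obtained by thickening $K \cup \gamma$: the $D^2 \times S^2$ summand of $P$ is identified with $N(K)$, and the $S^1 \times D^3$ summand with a regular neighborhood of $\gamma$ joined to $N(K)$ along two small disks on $\partial N(K)$.

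The non-triviality conditions are then easy to record. The core sphere $S_e = K$ is non-trivial by choice. To see that $\gamma$ is non-trivial as a cord, observe that a meridian of $\gamma$, pushed off into $E(K)$, is conjugate to a non-trivial element of the ribbon presentation of $\pi_1(E(K))$ because it encircles the ribbon band. If $\gamma$ were boundary parallel then this meridian would be conjugate in $E(K)$ to the meridian of $K$, contradicting the $1$-fusion ribbon group structure.

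Finally, I would compute the surgery. From the ribbon-disk presentation together with the cord, a Kirby diagram for $E(P_e)$ can be written down explicitly; attaching the new copy of $P$ with an appropriate slope $p/q$ and framing $\epsilon$ adds a $2$-handle along a curve homologous to $p[m_e]+q[l_e]$ together with a cancelling $1$-/$3$-handle structure. The hope is that a single slide of the new $2$-handle across the ribbon band cancels the $1$-handle corresponding to $\gamma$, and the surviving $2$-/$3$-handle pair cancels, leaving the standard handle decomposition of $S^4$. The principal obstacle is calibrating the slope and framing so that this band slide exactly unlinks the cord from the belt sphere; this is precisely the content of Theorem~\ref{non-trivial knot and cord}, and my proof of Theorem~\ref{intro:1-fusion} would consist of checking that the specific $1$-fusion example above satisfies its hypotheses.
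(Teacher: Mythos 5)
Your overall strategy --- exhibit a $1$-fusion ribbon $2$-knot as the core sphere, attach a cord, and reduce everything to Theorem~\ref{non-trivial knot and cord} --- is exactly what the paper does (its proof of Theorem~\ref{intro:1-fusion} is literally ``take the $S$ and $C$ of Theorem~\ref{non-trivial knot and cord}''). But as written your reduction has two genuine problems. First, Theorem~\ref{non-trivial knot and cord} is an existence statement whose only hypothesis is on the knot ($1$-fusion ribbon with $\pi_1(E(S_e))\not\cong\mathbb{Z}$); the cord is part of its \emph{conclusion}, and it is a specific one: the arc obtained by pushing a based meridian $l'$ of the second dotted $1$-handle of the ribbon-disk handle decomposition into the interior of $E(S)$. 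You instead propose your own cord (one ``piercing the ribbon band once'') and then invoke the theorem. That is not a legitimate reduction: the $\pi_1$ computation (Lemma~\ref{pitriviallemma}) and the handle calculus establishing the double structure are carried out for the theorem's cord, whose relative homotopy class is $[[y]]$ with $y$ the meridian generator of the second disk, and there is no argument that your cord lies in the same double coset. The fix is simply to use the cord the theorem provides.

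Second, your non-triviality argument for the cord is wrong in substance. Triviality of a cord is detected by the double coset space $G(S)/\!/\langle x\rangle$ (Lemma 2.1 of the paper): the cord is boundary parallel iff its class lies in the coset of the identity, i.e.\ iff the representing element lies in the subgroup $\langle x\rangle$ generated by the meridian. Conjugacy to the meridian of $K$ is not the right dichotomy: in the paper's construction the cord class $y$ \emph{is} conjugate to $x$ (all meridians of the pieces of a ribbon disk are conjugate in $G(S)$), yet the cord is non-trivial precisely because $y\notin\langle x\rangle$. So your claimed contradiction (``if $\gamma$ were boundary parallel then this meridian would be conjugate to the meridian of $K$'') proves nothing --- and for a loop encircling the band it is in fact \emph{true} that it is conjugate to the meridian. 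The correct argument, as in the paper's Lemma 4.1, is: if the cord class lay in $\langle x\rangle$, then $G(S)$ would be generated by $x$ alone, hence cyclic, hence isomorphic to $\mathbb{Z}$ by abelianization, contradicting $G(S)\not\cong\mathbb{Z}$. With these two repairs (use the theorem's cord, and replace the conjugacy argument by the double-coset argument) your proof collapses to the paper's.
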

Actually, as proven in Theorem~\ref{non-trivial knot and cord}, the core sphere of $e$ is any non-trivial ribbon 2-knot of 1-fusion.
Furthermore, there exist infinitely many cords for each such ribbon 2-knot such that these results all obtain the standard $S^4$.
\begin{thm}
\label{intro:n-fusion}
For any ribbon 2-knot $S\subset S^4$ with $\pi_1(E(S))\not\cong {\mathbb Z}$, there exists a non-trivial cord $C$ in $E(S)$ such that for the resulting pochette embedding $P\hookrightarrow S^4$ if $S^4(e,g)$ gives a homotopy 4-sphere then it is diffeomorphic to $S^4$.
\end{thm}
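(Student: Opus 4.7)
The idea is to build on the $1$-fusion proof (Theorem~\ref{non-trivial knot and cord}) by exploiting the ribbon structure. Given a ribbon $2$-knot $S\subset S^4$ of $n$-fusion, I would start from its standard presentation as $n+1$ unknotted $2$-spheres $S_0,\dots,S_n$ joined by $n$ embedded bands $b_1,\dots,b_n$. This yields a Kirby diagram of $E(S)$ with $n+1$ dotted circles (the meridional disks of the $S_i$) and $n$ $0$-framed $2$-handles (from the bands), from which $\pi_1(E(S))$ can be read as a Wirtinger-style presentation of deficiency one.

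I would then specify the non-trivial cord $C$ as a properly embedded arc in $E(S)$ that passes once through the fusion band $b_1$, with endpoints on opposite sides of $b_1$ on $\partial N(S)$. Adding the pochette along $(S,C)$ attaches a further $1$-handle (dual to $C$) and a $0$-framed $2$-handle (from $D^2\times S^2\subset P$) to the exterior, producing a Kirby diagram of $E(P_e)$. The geometric point of this choice is that the new $1$-handle from $C$ and the $b_1$-$2$-handle form a canceling pair, possibly after handle slides among the other $b_j$-$2$-handles.

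Next I would use Proposition~\ref{homology pochette surgery of S4} to compute the linking number of this embedding and to determine which slopes $(p/q,\epsilon)$ can possibly produce a homotopy $4$-sphere; the vanishing of $H_1$ should force $p=\pm 1$, so only $(1/q,\epsilon)$-surgeries survive as candidates. For each such surgery I would rewrite the Kirby diagram of $S^4(e,g)$, perform the cord--band cancellation to eliminate the $C$-$1$-handle together with the $b_1$-$2$-handle, and reduce to a diagram that is either obviously $S^4$ or a Gluck surgery along a ribbon $2$-knot of lower fusion number. The latter is $S^4$ by the classical fact (see \cite{GS}) that Gluck surgery on any ribbon $2$-knot gives the standard $4$-sphere, so in either case we reach $S^4$.

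The main obstacle will be the handle-calculus step: for a general ribbon $2$-knot the bands $b_j$ can be linked in complicated ways, so ensuring that $C$ pushed through $b_1$ really provides a geometrically canceling pair with the $b_1$-$2$-handle may require non-trivial slides of the remaining $2$-handles. Verifying that these slides do not spoil the slope or linking-number data computed from Proposition~\ref{homology pochette surgery of S4}, and that the hypothesis $\pi_1(E(S))\not\cong\mathbb{Z}$ is really being used to rule out candidate surgeries whose total space is not a homotopy $4$-sphere, is the delicate piece of the argument.
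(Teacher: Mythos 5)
Your strategy diverges from the paper's and has genuine gaps. First, the homological step is off: by Proposition~\ref{homology pochette surgery of S4} the homotopy $4$-sphere condition forces $|p+q\ell|=1$, where $\ell$ is the linking number of the embedding, not $p=\pm1$. For a non-trivial cord $\ell$ need not vanish and you never compute it --- in the paper's construction the coordinates are chosen so that $\ell=-1$, and the surviving slopes are $p/(p+1)$, a much larger family than $1/q$. So the reduction to $(1/q,\epsilon)$-surgeries fails, and in any case Theorem~\ref{main} only applies to \emph{trivial} cords. You also never verify that your cord (an arc passed once through the band $b_1$) is non-trivial; the paper chooses the cord group-theoretically, as a push-in of (the complement of a neighborhood of) a based meridian $l'$ of a dotted $1$-handle whose class $y$ is conjugate to the meridian generator $x$ but satisfies $y\notin\langle x\rangle$ --- such an $l'$ exists precisely because $G(S)\not\cong\mathbb{Z}$, and non-triviality is then read off from the double-coset description of cords.

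Second, your key geometric claim --- cancel the $C$-handle against the $b_1$-$2$-handle and land on a Gluck surgery of a lower-fusion ribbon knot --- is asserted rather than proved, and it is not the mechanism that closes the argument. There is no reason a pochette surgery with a non-trivial cord should coincide with a Gluck surgery on a simpler knot. The paper instead performs handle slides (as in Figures~\ref{fig:p/p+1surgery4}, \ref{fig:p/p+1surgery5} and \ref{graph2}) to move every $0$-framed $2$-handle coming from a dual band onto the meridian of a $2$-handle, exhibiting $S^4(e,g)$ as the double of a homology $4$-ball $H$ with no $3$-handles (Theorem~\ref{non-trivial knot and cord2}). Simple connectivity of the homotopy sphere then makes $H$ contractible, and the double of a contractible $4$-manifold built from $n$ $1$-handles and $n$ $2$-handles bounds $H\times I\cong D^5$ (Akbulut's argument quoted after Theorem~\ref{non-trivial knot and cord}), hence is $S^4$. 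This ``double of a contractible manifold'' step is the essential idea and is entirely absent from your proposal; without it, or a genuine proof of your cancellation claim, the argument does not go through.
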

In Theorem~\ref{non-trivial knot and cord2} we show that for any ribbon 2-knot there exists a non-trivial cord such that any pochette surgery yielding a homology 4-sphere gives the double of a homology  4-ball without 3-handles.

It is unknown that any pochette surgery with non-trivial $S_e$ gives an exotic manifold or not.
In general, even if $S_e$ is trivial in a 4-manifold $M$, then it is unclear whether the pochette surgery is trivial or not.
We expect that some pochette surgery creates a new exotic 4-manifold.

\subsection*{Acknowledgements}
Hisaaki Endo, who is an adviser of the first author, has constantly encouraged and advised our work and we really thank him.

\section{Preliminaries}
\label{pre}
\subsection{Embedding of \texorpdfstring{$P$}{TEXT}}
\label{embP}
To consider an embedding of $P$ in a 4-manifold $M$, as mentioned in the previous section, we embed a 2-sphere $S$ in $M$ with product neighborhood and embed a cord in the exterior $E(S)$.
In 4-dimension, the isotopy class of any 1-manifold coincides with the homotopy class.
Thus, the isotopy class of any embedding of $P$ is determined by a 2-knot with product neighborhood and the homotopy class of a cord as a properly embedding in $E(S)$.

Let $S$ be a 2-knot in a homology 4-sphere $M$.
Here we clarify the isotopy classes of embedding $e$ of $P$ with $S_e=S$.
We put $G(S)=\pi_1(E(S))$.
$G(S)$ includes a subgroup $\langle m\rangle$ that is isomorphic to ${\mathbb Z}$.
In this section, $m$ is regarded as the class represented by the meridian circle.
Here we call $\langle m\rangle$ a {\it boundary-subgroup}.

In fact, the abelianization map induces the surjection $G(S)\to H_1(E(S))\cong  {\mathbb Z}$ and the meridian is mapped to a generator in ${\mathbb Z}\subset H_1(E(S))$.
Thus $m$ is non-torsion in $G(S)$.
\begin{lem}
Let $S$ be a 2-knot in a homology 4-sphere $M$.
The set of properly embedded cords up to isotopy with the end points included in $\partial E(S)$ has a bijection to the double coset space $G(S)/\!/\langle m\rangle$.
\end{lem}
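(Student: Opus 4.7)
The plan is to construct a bijection explicitly by fixing a basepoint on $\partial E(S)$ and turning each cord into a based loop in $E(S)$, and then to identify the indeterminacy in that construction with the double coset action of $\langle m\rangle$ on both sides.

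First, I would fix a basepoint $x_0\in\partial E(S)$. For a properly embedded cord $c:(I,\partial I)\to (E(S),\partial E(S))$ with endpoints $p=c(0)$ and $q=c(1)$, pick auxiliary paths $\alpha$ from $x_0$ to $p$ and $\beta$ from $q$ to $x_0$, both lying in $\partial E(S)$; this is possible because $\partial E(S)\cong S^2\times S^1$ is connected. Set $\Phi(c):=[\alpha\cdot c\cdot\beta]\in G(S)$. Replacing $\alpha$ by $\alpha'$ multiplies $\Phi(c)$ on the left by the class of the loop $\alpha'\cdot\bar\alpha$, which lives in the image of $\pi_1(\partial E(S))\to G(S)$; since $\pi_1(S^2\times S^1)=\mathbb{Z}$ is generated by the meridian, this image is exactly $\langle m\rangle$. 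Similarly, changing $\beta$ right-multiplies by an element of $\langle m\rangle$. So $\Phi(c)$ descends to a well-defined element of $G(S)/\!/\langle m\rangle$, and the same mechanism shows $\Phi$ is invariant under isotopies of $c$ with endpoints sliding freely on $\partial E(S)$: the trajectories of the endpoints are paths in $\partial E(S)$ and can be absorbed into new choices of $\alpha,\beta$.

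For surjectivity, given $g\in G(S)$, represent it by a based loop $\gamma$ at $x_0$. By general position in the $4$-manifold $E(S)$, $\gamma$ is homotopic to an embedded circle; then pushing a small arc of this circle through a boundary collar near $x_0$ cuts it into a properly embedded arc $c\subset E(S)$ together with two short boundary arcs that serve as $\alpha$ and $\beta$. By construction $\Phi(c)=g$.

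For injectivity, suppose $\Phi(c)=\Phi(c')$ in $G(S)/\!/\langle m\rangle$. After absorbing the two $\langle m\rangle$-factors into the auxiliary paths, I can arrange $[\alpha\cdot c\cdot\beta]=[\alpha'\cdot c'\cdot\beta']$ as based loops in $E(S)$. Such a homotopy of loops restricts to a free homotopy of the arcs $c$ and $c'$ in $E(S)$ whose endpoint-tracks lie in $\partial E(S)$ (they are precisely $\alpha^{-1}\alpha'$ and $\beta'\beta^{-1}$). In dimension four a homotopy between two properly embedded arcs with tracks moving in the boundary can be promoted to an ambient isotopy by the standard general position argument (a homotopy is a map of a disk into $E(S)$, so self-intersections are generically $0$-dimensional and can be removed). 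Hence $c$ and $c'$ are isotopic as cords, establishing that $\Phi$ is injective.

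The main obstacle is the last step, the homotopy-implies-isotopy principle for arcs with endpoints moving on $\partial E(S)$; everything else is a routine base-point/path manipulation. I would cite (or briefly sketch) the standard general position fact in dimension four rather than reprove it, and would also take care at the start to note that the hypothesis that $S$ is a $2$-knot in a homology sphere ensures $\partial E(S)\cong S^2\times S^1$ so that the identification of the boundary $\pi_1$ with $\langle m\rangle$ is clean.
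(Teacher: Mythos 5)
Your argument is correct and is essentially the paper's proof run in the opposite direction: where the paper passes from cords to the relative homotopy set $\pi_1(E(S),\partial E(S))\cong\langle m\rangle\backslash G(S)$ and then quotients by the remaining $\langle m\rangle$-action coming from moving the initial endpoint in $\partial E(S)$, you build the map to $\langle m\rangle\backslash G(S)/\langle m\rangle$ directly by closing up a cord with auxiliary boundary paths, and the two $\langle m\rangle$-ambiguities you identify are exactly the two quotients in the paper. Both arguments rest on the same inputs, namely that the image of $\pi_1(\partial E(S))\cong\mathbb{Z}$ in $G(S)$ is $\langle m\rangle$ and that for arcs in a $4$-manifold homotopy (rel the boundary condition) coincides with isotopy, which the paper also invokes without proof.
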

\begin{proof}
The set of isotopy classes of the cord in $E(S)$ is
$$\Pi_1(E(S),\partial E(S)):=[(I,\partial I),(E(S),\partial E(S))].$$

Let $\pi_1(E(S),\partial E(S))$ be the relative homotopy set.
This set has the following bijection:
$$\pi_1(E(S),\partial E(S))\cong \langle m\rangle\backslash G(S).$$
It is due to the short exact sequence:
$$1\to \pi_1(\partial E(S))\to \pi_1(E(S))=G(S)\to \pi_1(E(S),\partial E(S))\to 1,$$
induced from the homotopy long exact sequence.

On the other hand, the natural map 
$$\varphi:\pi_1(E(S),\partial E(S))\to \Pi_1(E(S),\partial E(S))$$ 
is surjective.
Because any element in $\Pi_1(E(S),\partial E(S))$ can be realized as an element in $\pi_1(E(S),\partial E(S))$ by moving a starting point of the path to the base point $x_0$ of $\pi_1(E(S),\partial E(S))$ by a homotopy.
Suppose that $\varphi(\gamma_0)=\varphi(\gamma_1)$ for some $\gamma_0,\gamma_1\in \pi_1(E(S),\partial E(S))$.
Then $\gamma_0(0)=\gamma_1(0)=x_0\in \partial E(S)$ and $\gamma_0(1),\gamma_1(1)\in \partial E(S)$.
Now there is a homotopy $H:I\times I\to E(S)$ such that $H(0,\cdot)=\gamma_0$, $H(1,\cdot)=\gamma_1$, and $H(t,0),H(t,1)\in \partial E(S)$.
We put $c(t):=H(t,0)$, then $c(t)$ is a loop in $\partial E(S)$ with a base point $x_0$.
Thus $\gamma_0$ and $\gamma_1\cdot c$ are homotopic via a relative homotopy with $x_0$ fixing.
Namely, we have $\gamma_0=\gamma_1\cdot c\in \pi_1(E(S),\partial E(S))$.
Conversely, if $\gamma_0=\gamma_1\cdot c\in \pi_1(E(S),\partial E(S))$ for $c\in \pi_1(\partial E(S))$, then $\gamma_0$ and $\gamma_1$ are the same element in $\Pi_1(E(S),\partial E(S))$.
This means $\pi_1(E(S),\partial E(S))/\langle m\rangle \to \Pi_1(E(S),\partial E(S))$ is bijective.

Thus we obtain the following bijection:
$$\Pi_1(E(S),\partial E(S))\to \pi_1(E(S),\partial E(S))/\langle m\rangle$$
$$\to \langle m\rangle\backslash G(S)/\langle m\rangle=:G(S)/\!/\langle m\rangle.$$
\end{proof}
Let $[[\text{id}]]$ be the element in $G(S)/\!/\langle m\rangle$ represented by the trivial cord.
Here the class in the double coset is represented by $[[\cdot ]]$ and $\text{id}$ stands for the identity element in $G(S)$.
Hence, if the boundary-subgroup $\langle m\rangle$ is a proper subgroup in $G(S)$, then $G(S)/\!/\langle m\rangle\neq \{[[\text{id}]]\}$.
If $S$ is the trivial 2-knot in the 4-sphere, then $G(S)=\langle m\rangle$ and it has a unique isotopy class of a cord.
If $G(S)$ is not isomorphic to ${\mathbb Z}$, then there exists a non-trivial cord.

\subsection{Fundamental group of pochette surgery}
In general, to find a homotopy 4-sphere by pochette surgery, we need to compute the fundamental group.
Let $M$ be a 4-manifold and $e$ an embedding $e:P\hookrightarrow M$.
Let $c_{p,q}$ be the natural lift of $p[m_e]+q[l_e]$ to $\pi_1(\partial E(P_e))$, which is defined in \cite{IM}.
Let $l'$, and $m'$ be the images on $\pi_1(\partial E(P_e))$ of the  based, oriented, longitude and meridian in $\partial P$ via $e$ respectively.
Concretely, the element is given by
$$c_{p,q}=l'^{\lfloor\frac{q}{p}\rfloor}m'l'^{\lfloor\frac{2q}{p}\rfloor-\lfloor\frac{q}{p}\rfloor}m'l'^{\lfloor\frac{3q}{p}\rfloor-\lfloor\frac{2q}{p}\rfloor}\cdots m'l'^{\lfloor\frac{pq}{p}\rfloor-\lfloor\frac{(p-1)q}{p}\rfloor}m'.$$

We assume that the group presentation of $\pi_1(E(S))$ is $\pi_1(E(S))=\langle \mathcal{S}|\mathcal{R}\rangle$, where $\mathcal{S}$ is a set of generators and $\mathcal{R}$ is a set of relators.
For the inclusion maps $i:\partial P_e\to E(P_e)$ and $j:\partial P\to P$, the following maps are induced:
$$i_{\#}:\pi_1(\partial P_e)\to \pi_1(E(P_e)),$$
$$j_{\#}:\pi_1(\partial P)\to \pi_1(P).$$
From the Seifert–Van Kampen theorem, we have
\begin{equation}
\label{piformula}
\pi_1(M(e, p/q, \epsilon))=\langle \mathcal{S}|\mathcal{R},c_{p,q}\rangle.
\end{equation}

\subsection{Mod 2 framing}
\label{mod2f}
We define mod 2 framing of $g(m)$ for a gluing map $g:\partial P\to \partial E(P_e)$ explained in \cite[First paragraph in p.162]{IM}.
Let us consider a pochette surgery on $M$.
After attaching $D^2\times S^2$ in $P$ along $g(m)$, we can uniquely attach the remaining $S^1\times D^3$.
Hence, we have only to consider an identification between neighborhoods of $m$ and $g(m)$ via $g$ to attach $P$.

We fix an identification $\partial P=S^1\times \partial D^3\# \partial D^2\times S^2= S^1\times S^2\#S^1\times S^2$.
The meridian $m=\partial  D^2\times\{\text{pt}\}\subset \partial P$ has the natural product framing.
We obtain an identification $\iota:\partial E(P_e)\to S^1\times S^2\#S^1\times S^2$ through the embedding $e$.
Then, $S^1\times S^2\# S^1\times S^2$ can be presented by the 2-component unlink with 0-framings.
We map the natural framing on $m\subset \partial P$ to a framing on $g(m)$.
The framing is presented by an integer by the identification $\iota$.
As far as we consider the diffeomorphism type of the result of the pochette surgery,
we have only to consider an integer modulo 2 as the framing on $g(m)$.
In fact, consider $P$ as $S^1\times D^3$ attaching a 2-handle with the cocore $m$.
For two gluing maps $g_1, g_2:\partial P\to \partial E(P_e)$ with $g_1(m)=g_2(m)$ but with framings whose difference is divisible by 2,
the map $g_1^{-1}\circ g_2|_{N(m)}$ can be extended to the inside of the 2-handle.
Namely, two 4-manifolds attached by such gluing maps are diffeomorphic each other.
Such a framing on $g(m)$ is called a {\it mod 2 framing} and written by $\epsilon$.
\subsection{Linking number}
\label{linkinnumber}
Let $l$ and $S$ be the longitude and the core sphere of a pochette $P$ respectively.
Let $M$ be an oriented homology 4-sphere and $e:P\hookrightarrow M$ an embedding.
The images $l_e$, and $S_e$ in $M$ give submanifolds of $M$.
Then they can give the linking number: 
$$\ell=L(S_e,l_e),$$
according to \cite{B}.
In fact, we extend an embedding $e|_{S}:S\to M$ to a map $\mathcal{B}^3\to M$, where $\mathcal{B}^3$ is a homology 3-ball.
The orientation of $\mathcal{B}^3$ is induced by the one of $S_e$.
We count the intersection points between the image of $\mathcal{B}^3$ and $l_e$ with sign.
Here we deform $l_e$ in $E(S_e)$ so that $l_e$ can meet with $\mathcal{B}^3$ transversely.
For each intersection point if the concatenation of orientations on $\mathcal{B}^3$ and $l_e$ at the point coincides with the orientation of $M$, then the sign is $+1$, otherwise $-1$.
We call the sign a {\it local intersection number} at the intersection point.
In the end, we sum up the local intersection numbers through all the intersection points.
In the same way, we can compute $L(l_e,S_e)$ by changing the order of $l_e$ and $S_e$.

In the general theory of linking number, the absolute values of $L(S_e,l_e)$ and $L(l_e,S_e)$ are the same.
Actually, by the careful consideration of orientation we can easily obtain $L(S_e,l_e)=-L(l_e,S_e)=:\ell$.
We call this number $\ell$ {\it linking number} of the embedding $e$.
We must notice that the linking number is {\it not} an invariant of the embedding of $P$.
If we fix the coordinate $m$ and $l$, then the linking number can be determined.
This is due to what the 3-disk separating $S^1\times D^3$ and $S^2\times D^2$ is not unique.

Here let us reinterpret the linking number $L(S_e,l_e)$ in terms of the homology.
We use the intersection pairing:
$$\langle \cdot ,\cdot \rangle_3^4:H_3(E(S_e),\partial(E(S_e)))\times H_1(E(S_e))\to {\mathbb Z}.$$
Let $\mathcal{M}^3$ be a Seifert hypersurface of $S_e$ in $E(S_e)$, namely $\mathcal{M}^3$ is a properly embedded 3-manifold in $E(S_e)$ satisfying $\partial \mathcal{M}^3=S_e$.
$H_3(E(S_e),\partial(E(S_e)))$ is isomorphic to ${\mathbb Z}$ generated by $[\mathcal{M}^3]$.
Here $\mathcal{M}^3\cap E(S_e)$ and $\mathcal{M}^3$ are identified.
$H_3(E(P_e),\partial E(P_e))$ is isomorphic to ${\mathbb Z}$ and generated by $[\mathcal{M}^3]$.

The intersection point between $\mathcal{M}^3$ and $m_e$ is one point.
Here we give an orientation on $\mathcal{M}^3$ satisfying $\langle [\mathcal{M}^3],[m_e]\rangle_3^4=+1$.

By the definition of linking number, it follows that $\langle [\mathcal{M}^3],[l_e]\rangle_3^4=\ell$.
Since $H_1(E(S_e))$ is also isomorphic to ${\mathbb Z}$ generated by $[m_e]$, we have $[l_e]=\ell[m_e]$.

In the similar way we consider the next intersection pairing:
$$\langle \cdot,\cdot\rangle_2^4:H_2(E(l_e),\partial(E(l_e)))\times H_2(E(l_e))\to {\mathbb Z}.$$
Here we take a proper embedded surface $\Sigma$ satisfying $\partial \Sigma=l_e$ in $E(l_e)$.
We take the usual orientation of the meridian $B_e$ of $l_e$
and the orientation on $\Sigma$ by using $\langle [\Sigma],[B_e]\rangle_2^4 =+1$.
From the computation $L(l_e,S_e)=-\ell$ of the linking number, we obtain $\langle [\Sigma],[S_e]\rangle_2^4=-\ell$.
Since $H_2(E(l_e))$ is isomorphic to ${\mathbb Z}$ generated by the belt sphere $[B_e]$, $[S_e]=-\ell[B_e]$ holds.

\subsection{The homology of a pochette surgery}
Let $M$ be a homology 4-sphere. 
Here we compute the homology of the result by pochette surgery.
Let $g:\partial P\to \partial E(P_e)$ be a gluing map with the slope $p/q$ and the mod $2$ framing  $\epsilon$.
Let $i$ be the inclusion map $\partial E(P_e)\to E(P_e)$. 

To compute the homology group of any pochette surgery of a homology 4-sphere, we prove lemmas needed later.
First, we compute the homology of $E(P_e)$ here.
Since $E(P_e)$ is connected, we have $H_0(E(P_e))\cong{\mathbb Z}$.
\begin{lem}
\label{homology EPe}
$E(P_e)$ has the following homology groups:
$$H_n(E(P_e))=
\begin{cases}
    {\mathbb Z}\cdot[m_e]&n=1\\
    {\mathbb Z}\cdot[B_e]&n=2\\
0&n\ge 3.
\end{cases}$$
\end{lem}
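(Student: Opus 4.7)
The plan is to compute $H_*(E(P_e))$ via the Mayer--Vietoris sequence for the decomposition $M=E(P_e)\cup P_e$ whose intersection is a collar of $\partial P_e\cong (S^1\times S^2)\#(S^1\times S^2)$. The three input homologies are standard: $H_*(M)=H_*(S^4)$ since $M$ is a homology $4$-sphere; $P_e\simeq S^1\vee S^2$ gives $H_1(P_e)=\mathbb Z\langle[l]\rangle$ and $H_2(P_e)=\mathbb Z\langle[S]\rangle$; and $\partial P_e$ has $H_1=\mathbb Z\langle[l],[m]\rangle$, $H_2=\mathbb Z\langle[S],[B]\rangle$, $H_3=\mathbb Z$.

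First I would handle the top range. The piece
\[
0\to H_4(M)\to H_3(\partial P_e)\to H_3(E(P_e))\oplus H_3(P_e)\to H_3(M)=0
\]
reduces to $0\to\mathbb Z\to\mathbb Z\to H_3(E(P_e))\to 0$, with the first arrow sending $[M]$ to $\pm[\partial P_e]$ and hence an isomorphism; this forces $H_3(E(P_e))=0$. The vanishing $H_n(E(P_e))=0$ for $n\ge 4$ is immediate from $\dim E(P_e)=4$ and $\partial E(P_e)\ne\varnothing$. In the middle, $H_2(M)=H_1(M)=0$ collapse the sequence in degrees $n=1,2$ to the isomorphism
\[
(i_*,-j_*)\colon H_n(\partial P_e)\xrightarrow{\ \cong\ } H_n(E(P_e))\oplus H_n(P_e),
\]
and since the domain is $\mathbb Z^2$ while the codomain contains the $\mathbb Z$ summand $H_n(P_e)$, we get that $H_n(E(P_e))$ is free abelian of rank~$1$ for $n=1,2$.

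What remains is to single out the preferred generators. The inclusion $j\colon\partial P\hookrightarrow P$ kills $[m]$ and $[B]$, via the obvious bounding chains $D^2\times\{\mathrm{pt}\}$ and $\{\mathrm{pt}\}\times D^3$ in $P$, and sends $[l]$, $[S]$ to generators of $H_1(P_e)$, $H_2(P_e)$. Writing a putative generator $g$ of $H_1(E(P_e))$ together with $[l_e]=ag$ and $[m_e]=bg$, the $2\times 2$ matrix expressing $(i_*,-j_*)$ in the bases $\{[l],[m]\}$ and $\{g,j_*[l]\}$ has determinant $\pm b$, and unimodularity forces $b=\pm 1$; thus $[m_e]$ generates $H_1(E(P_e))$. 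The strictly parallel argument with $B$ and $S$ shows $[B_e]$ generates $H_2(E(P_e))$.

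The computation itself is routine; the only mildly delicate point will be this last bit of bookkeeping, confirming that the distinguished generators are $[m_e]$ and $[B_e]$ rather than $[l_e]$ and $[S_e]$. This identification is exactly what makes the relations $[l_e]=\ell[m_e]$ and $[S_e]=-\ell[B_e]$ recorded in Section~\ref{linkinnumber} meaningful in what follows.
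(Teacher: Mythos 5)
Your proof is correct, but it takes a genuinely different decomposition from the paper's. You run Mayer--Vietoris on $M=E(P_e)\cup P_e$ with intersection $\partial P_e\cong\#^2(S^1\times S^2)$, using $H_*(M)\cong H_*(S^4)$ to collapse the sequence to isomorphisms $H_n(\partial P_e)\cong H_n(E(P_e))\oplus H_n(P_e)$ for $n=1,2$ and to kill $H_3$ via the connecting map $H_4(M)\to H_3(\partial P_e)$. The paper instead attaches a $3$-handle along the belt sphere, writing $E(S_e)=E(P_e)\cup h^3$ with $E(P_e)\cap h^3=S^2\times D^1$, and feeds in the known fact that the $2$-knot exterior $E(S_e)$ in a homology $4$-sphere has the homology of $S^1$ generated by the meridian. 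The trade-off: the paper's route makes the preferred generators $[m_e]$ and $[B_e]$ visible for free ($[m_e]$ generates $H_1(E(S_e))$, and $[B_e]$ is the image of the generator of $H_2(S^2\times D^1)$), at the cost of quoting the Alexander-duality computation for $E(S_e)$; your route is self-contained against $M$ itself but must recover the generators by the unimodularity argument at the end, which you carry out correctly --- $j_*$ kills $[m]$ and $[B]$ and sends $[l]$, $[S]$ to generators of $H_*(P_e)$, so the determinant condition forces $i_*[m]$ and $i_*[B]$ to generate. Both hypotheses used ($M$ a homology $4$-sphere) match the context of the lemma, and your identification of the generators is exactly what Lemma~\ref{SB} and Proposition~\ref{homology pochette surgery of S4} need downstream.
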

\begin{proof}
Let $h^3$ be a $4$-dimensional 3-handle.
Attaching $h^3$ on the belt sphere of $P_e$, we obtain 
$E(P_e)\cup h^3=E(S_e)$ and $E(P_e)\cap h^3=\partial D^3\times D^1=S^2\times D^1$. 
The homology of $E(S_e)$ is the same as the homology of $S^1$ and the first homology group is generated by the meridian $m_e$.
Since $H_1$ is independent of attaching any 3-handle, we have
$H_1(E(P_e))=H_1(E(P_e)\cup h^3)=H_1(E(S_e))={\mathbb Z}[m_e]\cong{\mathbb Z}$.
Then we obtain the Mayer-Vietoris sequence:
$$\cdots{\longrightarrow}H_n(S^2\times D^1){\longrightarrow}H_n(E(P_e))\oplus H_n(h^3){\longrightarrow}H_n(E(S_e)){\longrightarrow}\cdots.$$
Thus, we can easily check
$$H_n(E(P_e))=
\begin{cases}
    {\mathbb Z}&n=2\\
    0&n=3,4. 
\end{cases}$$
The generator of $H_1$ clearly corresponds to the meridian $m_e$ of $E(S_e)$ and the one of $H_2$ corresponds to the generator, the belt sphere $B_e$ which is the image of $H_2(S^2\times D^1)$.
\end{proof}

From this lemma, we obtain natural isomorphisms $H_1(E(P_e))\cong H_1(E(S_e))$ and $H_2(E(P_e))\cong H_2(E(l_e))$.
The isomorphisms are induced by the inclusions and connect the corresponding elements $[m_e]$ and $[B_e]$.

Let $g$ be a gluing map from $\partial P$ to $\partial E(P_e)$.
Suppose that $g_\ast([m])=p[m_e]+q[l_e]$ is satisfied on the first homology group.
\begin{lem}
\label{prehomology pochette surgery of S4}
If $g_\ast([m])=p[m_e]+q[l_e]$, then we have $g_\ast([B])=p[B_e]-q[S_e]$.
\end{lem}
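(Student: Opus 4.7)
The plan is to exploit the intersection pairing on the closed oriented $3$-manifolds $\partial P$ and $\partial E(P_e)$, both diffeomorphic to $(S^1\times S^2)\#(S^1\times S^2)$. The classes $[m],[l]$ form a basis of $H_1(\partial P)$ and $[B],[S]$ a basis of $H_2(\partial P)$, with the analogous bases on $\partial E(P_e)$. I would first record the pairing matrices, then use that $g_\ast$ preserves the pairing, and finally determine $g_\ast[B]$ by combining the resulting linear constraints with the hypothesis $g_\ast[m]=p[m_e]+q[l_e]$.

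For the pairings: the pair $m,S$ lies inside the first $S^1\times S^2$ summand (coming from the $D^2\times S^2$ piece of $P$) and intersects transversely in a single point, so $[m]\cdot[S]=1$. Similarly $[l]\cdot[B]=1$ in the second summand. The cross pairs $[m]\cdot[B]$ and $[l]\cdot[S]$ vanish because each such pair lies in disjoint summands of the connected sum. Hence in these bases the intersection pairing on both 3-manifolds has matrix $\bigl(\begin{smallmatrix}0&1\\1&0\end{smallmatrix}\bigr)$.

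Next, writing $g_\ast[B]=a[B_e]+b[S_e]$ and using that the diffeomorphism $g$ preserves the intersection pairing, the identity $g_\ast[m]\cdot g_\ast[B]=[m]\cdot[B]=0$ expands via $g_\ast[m]=p[m_e]+q[l_e]$ to $pb+qa=0$. Since $\gcd(p,q)=1$, this forces $(a,b)=k(p,-q)$ for some integer $k$. To show $k=1$, let $g_\ast[l]=r[m_e]+s[l_e]$; then $g_\ast[l]\cdot g_\ast[B]=[l]\cdot[B]=1$ becomes $rb+sa=k(ps-qr)=1$. Since $g_\ast|_{H_1(\partial P)}$ is an isomorphism we have $ps-qr=\pm 1$, and the paper's orientation convention on the gluing map fixes this to $+1$, yielding $k=1$ and hence $g_\ast[B]=p[B_e]-q[S_e]$.

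The main obstacle will be the orientation bookkeeping: one must ensure that the chosen bases on $\partial P$ and on $\partial E(P_e)$ (the latter with its outward orientation as boundary of $E(P_e)$, opposite to that of $\partial P_e$) are oriented so that $g$ preserves the pairing and satisfies $\det(g_\ast|_{H_1})=+1$. Once those sign choices are fixed, the linear-algebraic core of the proof is immediate from the coprimality of $(p,q)$ and the hyperbolic form of the pairing matrix.
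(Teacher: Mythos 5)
Your proposal is correct and follows essentially the same route as the paper: both use the nondegenerate intersection pairing $H_1(\partial P)\times H_2(\partial P)\to\mathbb{Z}$ with $\langle[m],[S]\rangle=\langle[l],[B]\rangle=1$ and the cross terms zero, the invariance of this pairing under $g_\ast$, and the determinant condition $ps-qr=1$ from $g$ being orientation preserving. The only cosmetic difference is that the paper computes $g_\ast^{-1}$ explicitly and pairs against $[B]$, whereas you solve the two linear constraints directly using coprimality of $p,q$; the content is identical.
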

\begin{proof}
We put $g_\ast([l])=r[m_e]+s[l_e]$, $g_\ast([B])=x[B_e]+y[S_e]$. 
Then, we can define the non-degenerate bilinear form $\langle\cdot , \cdot \rangle_3: H_1(\partial P)\times H_2(\partial P) \to {\mathbb Z}$ from the cup product $H^2(\partial P)\times H^1(\partial P) \to H^3(\partial P)$.

By defining
$$\langle[m],[B]\rangle_3=0, \langle[l],[B]\rangle_3=1, \langle[m],[S]\rangle_3=1,\text{ and } \langle[l],[S]\rangle_3=0,$$
we determine the orientations on $m$ and $B$.
These orientations coincide with the ones determined Section~\ref{linkinnumber} via the map $H_n(\partial P_e)\to H_n(E(P_e))$.
Since $g:\partial P\to \partial E(P_e)$ is a diffeomorphism map, we can define the non-degenerate bilinear form $\langle\cdot , \cdot \rangle_3^e:H_1(\partial E(P_e))\times H_2(\partial E(P_e)) \to {\mathbb Z}$ from the non-degenerate bilinear form $\langle\cdot , \cdot \rangle_3: H_1(\partial P)\times H_2(\partial P) \to {\mathbb Z}$. 
Since $g:\partial P\to \partial E(P_e)$ is an orientation preserving diffeomorphism map, the determinant of the matrix given by the following presentation 
$$
\left(
\begin{array}{cc}
g_\ast([m]) & g_\ast([l])
\end{array}
\right)
=
\left(
\begin{array}{cc}
[m_e] & [l_e]
\end{array}
\right)
\left(
\begin{array}{cc}
p & r\\
q & s
\end{array}
\right)$$
is 1.
Hence we obtain $ps-qr=1$.
Thus the inverse is as follows:
$$
\left(
\begin{array}{cc}
g_\ast^{-1}([m_e]) & g_\ast^{-1}([l_e])
\end{array}
\right)
=
\left(
\begin{array}{cc}
[m] & [l]
\end{array}
\right)
\left(
\begin{array}{cc}
s & -r\\
-q & p
\end{array}
\right). $$ 
Since
$$\langle g_\ast(\alpha), g_\ast(\beta)\rangle_3^e=\langle\alpha,\beta\rangle_3\mathrm{\ for \ all\ } \alpha\in H_1(\partial P),\ \beta\in H_2(\partial P), $$
we have
\begin{eqnarray*}
x&=&\langle[l_e], x[B_e]+y[S_e]\rangle_3^e \\
 &=&\langle[l_e], g_\ast([B])\rangle_3^e \\
 &=&\langle g_\ast^{-1}([l_e]),[B]\rangle_3 \\
 &=&\langle-r[m]+p[l],[B]\rangle_3 =p 
\end{eqnarray*}
and
\begin{eqnarray*}
y&=&\langle[m_e], x[B_e]+y[S_e]\rangle_3^e \\
 &=&\langle[m_e], g_\ast([B])\rangle_3^e \\
 &=&\langle g_\ast^{-1}([m_e]),[B]\rangle_3 \\
 &=&\langle s[m]-q[l],[B]\rangle_3 =-q.
\end{eqnarray*}
Therefore, we obtain the desired result above.
\end{proof}

\begin{lem}
\label{SB}
Let $e$ be an embedding $P\hookrightarrow M$ with linking number $\ell$.
Let $i$ be an inclusion $i:\partial E(P_e)\to E(P_e)$.
Then $i_*([l_e])=\ell[m_e]$ and $i_*([S_e])=-\ell[B_e]$ are satisfied.
\end{lem}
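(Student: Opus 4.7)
The plan is to reduce both identities to the intersection-pairing computations already carried out in Section~\ref{linkinnumber}, by pushing everything through the natural isomorphisms produced in Lemma~\ref{homology EPe}. Recall that in that lemma, attaching a $3$-handle $h^3$ along the belt sphere $B_e$ identifies $E(P_e)\cup h^3$ with $E(S_e)$, and the resulting Mayer--Vietoris argument shows that the inclusion $E(P_e)\hookrightarrow E(S_e)$ induces an isomorphism $H_1(E(P_e))\cong H_1(E(S_e))={\mathbb Z}[m_e]$. A symmetric description (attaching a $2$-handle along the meridian $m_e$ of the core sphere) identifies the inclusion-induced map $H_2(E(P_e))\cong H_2(E(l_e))={\mathbb Z}[B_e]$. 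Therefore it suffices to verify $[l_e]=\ell[m_e]$ in $H_1(E(S_e))$ and $[S_e]=-\ell[B_e]$ in $H_2(E(l_e))$.

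For the first identity, I would use the non-degenerate intersection pairing
$$\langle\cdot,\cdot\rangle_3^4:H_3(E(S_e),\partial E(S_e))\times H_1(E(S_e))\to{\mathbb Z}$$
tested against the Seifert hypersurface $\mathcal{M}^3$ of $S_e$ built in Section~\ref{linkinnumber}, oriented so that $\langle[\mathcal{M}^3],[m_e]\rangle_3^4=+1$. By the very definition of the linking number $\ell=L(S_e,l_e)$ given there, $\langle[\mathcal{M}^3],[l_e]\rangle_3^4=\ell$. Writing $[l_e]=k[m_e]$ with $k\in{\mathbb Z}$ and pairing both sides with $[\mathcal{M}^3]$ forces $k=\ell$, and the claim in $E(P_e)$ follows from the first isomorphism above.

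The second identity is entirely analogous. Apply the non-degenerate pairing
$$\langle\cdot,\cdot\rangle_2^4:H_2(E(l_e),\partial E(l_e))\times H_2(E(l_e))\to{\mathbb Z}$$
against the Seifert surface $\Sigma$ of $l_e$, oriented so that $\langle[\Sigma],[B_e]\rangle_2^4=+1$. Then $\langle[\Sigma],[S_e]\rangle_2^4=L(l_e,S_e)=-\ell$, and the same argument yields $[S_e]=-\ell[B_e]$ in $H_2(E(l_e))$, hence in $H_2(E(P_e))$. No genuine obstacle arises; the only thing requiring care is bookkeeping of orientations, namely that the orientations placed on $\mathcal{M}^3$ and $\Sigma$ in Section~\ref{linkinnumber} are consistent with the orientations of $m_e$ and $B_e$ fixed in Lemma~\ref{homology EPe}, which is immediate since both are normalized by the intersection conditions $\langle[\mathcal{M}^3],[m_e]\rangle_3^4=+1$ and $\langle[\Sigma],[B_e]\rangle_2^4=+1$.
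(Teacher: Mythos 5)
Your proposal is correct and follows essentially the same route as the paper: the paper's proof also reduces the two identities to the intersection-pairing computations with $\mathcal{M}^3$ and $\Sigma$ carried out in Section~\ref{linkinnumber}, transported through the inclusion-induced isomorphisms $H_1(E(P_e))\cong H_1(E(S_e))$ and $H_2(E(P_e))\cong H_2(E(l_e))$ from Lemma~\ref{homology EPe}. You merely spell out the pairing argument that the paper cites by reference, so there is nothing to add.
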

\begin{proof}
The image of $[l_e]\in H_1(\partial E(P_e))$ by $i_\ast$ is also $[l_e]$ in $H_1(E(P_e))$.
Since $H_1(E(P_e))$ and $H_1(E(S_e))$ are identified with each other by the natural isomorphism by the inclusion, the elements $[m_e]$ having in these homology groups are mapped.
Hence, from Section~\ref{linkinnumber}, $[l_e]=\ell[m_e]$ also holds in $H_1(E(P_e))$.
In the same way, we have $i_\ast([S_e])=-\ell[B_e]$.
\end{proof}
Here, we compute the homology groups of the pochette surgery $M(e, p/q, \epsilon)$.
Since $M$ is connected and oriented, $H_0(M(e, p/q, \epsilon))\cong H_4(M(e, p/q, \epsilon))\cong{\mathbb Z}$ is satisfied.
We compute $H_n$ of $M$ for $n=1,2,3$.

\begin{prop}
\label{homology pochette surgery of S4}
Let $M$ be a homology 4-sphere.
Let $e$ be an embedding with linking number $\ell$.
Then, $M(e, p/q, \epsilon)$ has the following homology groups:
\begin{enumerate}
    \item[(i)] If $p+q\ell\neq 0$, then
$$H_n(M(e,p/q,\epsilon))\cong\begin{cases}
    {\mathbb Z}/(p+q\ell){\mathbb Z}&n=1,2\\
    0&n=3.
    \end{cases}$$
    \item[(ii)]
    If $p+q\ell=0$, then 
    $$H_n(M(e,p/q,\epsilon))\cong\begin{cases}
    {\mathbb Z}&n=1,3\\
    {\mathbb Z}^2&n=2.
    \end{cases}$$
\end{enumerate}
\end{prop}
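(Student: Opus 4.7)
The plan is to apply Mayer--Vietoris to $X := M(e,p/q,\epsilon) = E(P_e) \cup_g P$ with overlap $\partial P \cong \partial E(P_e)$. The homology of $E(P_e)$ is recorded in Lemma~\ref{homology EPe}, while $P \simeq S^1 \vee S^2$ gives $H_n(P) = \mathbb{Z}$ for $n = 0,1,2$ (with $H_1, H_2$ generated by $[l]$ and $[S]$) and $0$ otherwise, and $H_\ast(\partial P) = H_\ast(S^1 \times S^2 \# S^1 \times S^2)$ is free with bases $\{[m],[l]\}$ in degree $1$ and $\{[B],[S]\}$ in degree $2$. The nontrivial step is to identify the connecting map
$$\phi_n \colon H_n(\partial P) \longrightarrow H_n(E(P_e)) \oplus H_n(P), \qquad \alpha \longmapsto (i_*g_*(\alpha),\, -j_*(\alpha)),$$
where $i, j$ are the inclusions into $E(P_e)$ and $P$.

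I would first dispatch the easy halves: $j_*[m] = j_*[B] = 0$ since $m$ bounds a disk and $B$ bounds a ball in $P$, while $j_*[l], j_*[S]$ generate $H_1(P), H_2(P)$. For the half through $E(P_e)$, I set $g_*[l] = r[m_e] + s[l_e]$ with $ps - qr = 1$ and repeat the intersection-pairing argument of Lemma~\ref{prehomology pochette surgery of S4} to obtain $g_*[S] = -r[B_e] + s[S_e]$. Postcomposing with $i_*$ and applying Lemma~\ref{SB} (which substitutes $[l_e] = \ell[m_e]$ and $[S_e] = -\ell[B_e]$) yields
$$\phi_1 = \begin{pmatrix} p+q\ell & r+s\ell \\ 0 & -1 \end{pmatrix}, \qquad \phi_2 = \begin{pmatrix} p+q\ell & -(r+s\ell) \\ 0 & -1 \end{pmatrix},$$
and the higher-degree pieces of the sequence are trivial aside from $H_3(\partial P) \cong \mathbb{Z}$, which feeds isomorphically into $H_4(X)$.

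Finally, I split into cases. When $p + q\ell \neq 0$, the $-1$ entry reduces each matrix to Smith normal form $\mathrm{diag}(1, p+q\ell)$, so both $\phi_1$ and $\phi_2$ are injective with cokernel $\mathbb{Z}/(p+q\ell)\mathbb{Z}$; feeding this into the long exact sequence yields case~(i). When $p + q\ell = 0$, coprimality of $p,q$ forces $|q|=1$ (hence $p = -q\ell$ and $r+s\ell = \mp 1$), both matrices drop to rank $1$ with kernel and cokernel each free of rank $1$, and the short exact sequence $0 \to \mathrm{coker}(\phi_2) \to H_2(X) \to \ker(\phi_1) \to 0$ splits since its right-hand term is $\mathbb{Z}$, producing case~(ii). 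The main obstacle is tracking signs carefully enough to be sure of the formula $g_*[S] = -r[B_e] + s[S_e]$, but the intersection-form bookkeeping of Lemma~\ref{prehomology pochette surgery of S4} handles this uniformly once the Poincar\'e duality pairing on $\partial P$ is fixed.
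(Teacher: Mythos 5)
Your proposal is correct and follows essentially the same route as the paper: the Mayer--Vietoris sequence for $M(e,p/q,\epsilon)=E(P_e)\cup_g P$, with Lemma~\ref{homology EPe} for $H_*(E(P_e))$, Lemma~\ref{prehomology pochette surgery of S4} (and its intersection-pairing argument, which you also apply to $[l]$ and $[S]$) for $g_*$, and Lemma~\ref{SB} to substitute $[l_e]=\ell[m_e]$, $[S_e]=-\ell[B_e]$. Your explicit computation of $g_*[S]=-r[B_e]+s[S_e]$ just makes precise the coefficient that the paper leaves as ``some integers $r',s'$ with $ps'+qr'=1$,'' and the case analysis on $p+q\ell$ matches the paper's.
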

Note that the case of $p+q\ell=0$ means $(p,q)=(\ell,-1), (-\ell,1)$ because $p,q$ are relatively prime.\\

\begin{proof}
The embedding map $e: P\hookrightarrow M$ induces the following map:
$$H_n(\partial P)\overset{g_{\ast}}{\longrightarrow}H_n(\partial E(P_e))\overset{i_{\ast}}{\longrightarrow}H_n(E(P_e)).$$
Then we have $H_1(\partial E(P_e))={\mathbb Z}\cdot[m_e]\oplus {\mathbb Z}\cdot[l_e]$, $H_2(\partial E(P_e))={\mathbb Z}\cdot[B_e]\oplus {\mathbb Z}\cdot[S_e]$
and obtain $g_{\ast}([m])=p[m_e]+q[l_e]$, $i_{\ast}([m_e])=[m_e]$ and $i_{\ast}([B_e])=[B_e]$.
By Lemma \ref{prehomology pochette surgery of S4}, we obtain $g_{\ast}([B])=p[B_e]-q[S_e]$.
By Lemma \ref{SB}, we have $i_{\ast}([l_e])=\ell[m_e]$ and $i_{\ast}([S_e])=-\ell[B_e]$.
By Lemma \ref{homology EPe} and the Mayer-Vietoris sequence
$$\cdots\to H_\ast(E(P_e))\oplus H_\ast(P)\to H_\ast(M(e,p/q,\epsilon))\to H_{\ast-1}(\partial P)\to \cdots,$$
we obtain the following:\\
$$
  \begin{CD}
      @>>>  0 @>>>  H_3(M(e, p/q, \epsilon))  @>{\partial_{3}}>> {\mathbb Z}\cdot[B]\oplus{\mathbb Z}\cdot[S] \\
      @>{j_{21}\oplus j_{22}}>> {\mathbb Z}\cdot[B_e]\oplus{\mathbb Z}\cdot[S] @>{i_2}>> H_2(M(e, p/q, \epsilon)) @>{\partial_{2}}>> {\mathbb Z}\cdot[m]\oplus{\mathbb Z}\cdot[l] \\
      @>{j_{11}\oplus j_{12}}>> {\mathbb Z}\cdot[m_e]\oplus{\mathbb Z}\cdot[l] @>{i_1}>> H_1(M(e, p/q, \epsilon)) @>{\partial_{1}=0}>> H_0(\partial P).
  \end{CD}
$$
\\
We put $j_n=j_{n1}\oplus j_{n2}$ for any $n\in{\mathbb Z}$.
Since we have $\partial_1=0$, $i_1$ is a surjection.
Since we have 
$j_1([m])=(p+q\ell)[m_e]$ and $j_1([l])=(r+s\ell)[m_e]+[l]$,
we obtain
\begin{eqnarray*}
H_1(M(e, p/q, \epsilon))
&=&\mathrm{Im}\,i_1\\
&\cong&{\mathbb Z}\cdot[m_e]\oplus{\mathbb Z}\cdot[l]/\langle (p+q\ell)[m_e],(r+s\ell)[m_e]+[l]\rangle\\
&\cong&{\mathbb Z}\cdot [m_e]/\langle(p+q\ell)[m_e]\rangle\\
&\cong&{\mathbb Z}/(p+q\ell){\mathbb Z}. 
\end{eqnarray*}
Here $r,s$ are the same coefficients as the ones used in the proof of Lemma~\ref{prehomology pochette surgery of S4}.

Next, we compute $H_2$ and $H_3$ of the result of the pochette surgery.

If $p+q\ell\neq 0$, then $j_1$ is an injection.
Since $i_2$ is a surjection, we obtain the following isomorphism:
\begin{eqnarray*}
H_2(M(e, p/q, \epsilon))
&=&\mathrm{Im}\,i_2\\
&\cong&{\mathbb Z}\cdot[B_e]\oplus{\mathbb Z}\cdot[S]/\langle (p+q\ell)[B_e],(r'+s'\ell)[B_e]+[S]\rangle\\
&\cong&{\mathbb Z}\cdot [B_e]/\langle(p+q\ell)[B_e]\rangle\\
&\cong& {\mathbb Z}/(p+q\ell){\mathbb Z}. 
\end{eqnarray*}
Here, $r',s'$ are some integers satisfying $ps'+qr'=1$.
In this case, $\mathrm{Im}\,\partial_3=\mathrm{Ker}\,j_2=0$.
Thus we have
$$H_3(M(e, p/q, \epsilon))=\mathrm{Ker}\,\partial_3=0.$$

If $p+q\ell=0$, then $\mathrm{Im}\,\partial_2=\mathrm{Ker}\,j_1={\mathbb Z}\cdot[m]$. 
Thus we have
$$H_2(M(e, p/q, \epsilon))\cong\mathrm{Im}\,i_2\oplus{\mathbb Z}\cdot[m]\cong{\mathbb Z}\cdot [B_e]\oplus{\mathbb Z}\cdot[m].$$
In this case, $\mathrm{Im}\,\partial_3=\mathrm{Ker}\,j_2={\mathbb Z}\cdot[B]$. 
Thus we have
$$H_3(M(e, p/q, \epsilon))\cong{\mathbb Z}\cdot [B].$$
Therefore, we obtain the desired result above.
\end{proof}

The theorems by Whitehead in \cite{W}, Freedman in \cite{F} and Lemma~\ref{homology pochette surgery of S4} imply the next corollary.

\begin{cor}
\label{cor}
Let $M$ be a homology 4-sphere. 
$M(e, p/q, \epsilon)$ is homeomorphic to $S^4$ if and only if $M(e, p/q, \epsilon)$ is a simply-connected 4-manifold and $|p+q\ell|$ is equal to 1.
\end{cor}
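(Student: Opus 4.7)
The plan is to prove both implications directly from Proposition~\ref{homology pochette surgery of S4} together with the cited theorems of Whitehead and Freedman, so no new geometric input is required.

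For the forward direction, I would assume $M(e,p/q,\epsilon)$ is homeomorphic to $S^4$. Then it is certainly simply-connected, and $H_1(M(e,p/q,\epsilon))=0$. By Proposition~\ref{homology pochette surgery of S4}, $H_1$ is either $\mathbb{Z}/(p+q\ell)\mathbb{Z}$ or $\mathbb{Z}$, depending on whether $p+q\ell\neq 0$. The second case is excluded, so $p+q\ell\neq 0$ and $\mathbb{Z}/(p+q\ell)\mathbb{Z}=0$, forcing $|p+q\ell|=1$.

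For the reverse direction, I would assume $M(e,p/q,\epsilon)$ is simply-connected and $|p+q\ell|=1$. Since $p+q\ell\neq 0$, case (i) of Proposition~\ref{homology pochette surgery of S4} applies and gives $H_1\cong H_2\cong \mathbb{Z}/(p+q\ell)\mathbb{Z}=0$ and $H_3=0$. Together with $H_0\cong H_4\cong\mathbb{Z}$, this shows $M(e,p/q,\epsilon)$ is a simply-connected integral homology 4-sphere. By Whitehead's theorem (a map between simply-connected CW complexes inducing isomorphisms on homology is a homotopy equivalence), $M(e,p/q,\epsilon)$ is homotopy equivalent to $S^4$, i.e.\ a homotopy 4-sphere. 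Freedman's topological Poincaré theorem for dimension four then concludes that it is homeomorphic to $S^4$.

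There is no real obstacle here, since the algebraic content is already packaged in Proposition~\ref{homology pochette surgery of S4}; the only subtlety is to record that the case $p+q\ell=0$ cannot arise under either hypothesis (it fails $H_1=0$ in both directions), so only case (i) of the proposition is ever used. Combining the two directions yields the stated equivalence.
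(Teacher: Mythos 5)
Your proposal is correct and follows essentially the same route as the paper: it combines Proposition~\ref{homology pochette surgery of S4} with the Whitehead and Freedman theorems, merely spelling out the two implications (and the exclusion of the case $p+q\ell=0$) that the paper leaves as ``we can easily check.'' No substantive difference.
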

\begin{proof}
By the Freedman theorem, $M(e, p/q, \epsilon)$ is homeomorphic to $S^4$ if and only if $M(e, p/q, \epsilon)$ is homotopy equivalent to $S^4$. 
We will only show that $M(e, p/q, \epsilon)$ is homotopy equivalent to $S^4$ if and only if $M(e, p/q, \epsilon)$ is a simply-connected 4-manifold and $|p+q\ell|=1$. 
By the Whitehead theorem, the necessary and sufficient condition for a manifold to be homotopy equivalent to $S^4$ is $\pi_1=\{\text{id}\}$ and $H_n=0$ for $n=1,2,3$.
From Proposition~\ref{homology pochette surgery of S4}, we can easily check this corollary follows.
\end{proof}

\subsection{Images of the meridian by diffeomorphism.}
\label{meridianposition}
In this section we describe images of $m$ via some gluing maps $g:\partial P\to \partial E(P_e)$ with slope $1/p$ and $p/(p+1)$.
In the first diagram in Figure~\ref{1qlift} we describe $m,l\subset \#^2S^2\times S^1$.
By sliding along the dashed arrow in the first picture, $m$ is moved to a curve represented by $[m]+[l]$ in the second picture.
Furthermore, sliding the diagram along the dashed arrow, we obtain the third picture.
Then $[m]+[l]$ is moved to a curve by represented by $[m]+2[l]$.
By the same diffeomorphism, $[m]+2[l]$ is moved to a curve represented by $[m]+3[l]$ in the fourth picture.

Thus, by the diffeomorphism $h:\#^2S^2\times S^1\to \#^2S^2\times S^1$ with slope $1/p$, meridian $m$ is moved to a curve represented in $[m]+p[l]$ as in the bottom picture in Figure~\ref{1qlift}.
This position will be used when we describe the handle diagram of $M(e,1/p,\epsilon)$.
    
\begin{figure}[htbp]
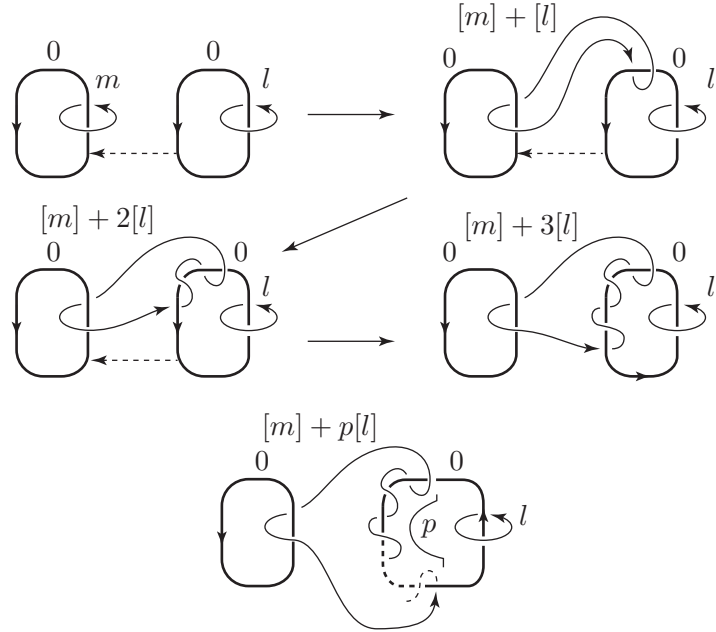

\begin{overpic}
{1qlift}

\put(5,82){0}
\put(28,82){0}
\put(12,78){$m$}
\put(36,78){$l$}

\put (62,81){0}
\put(95,81){0}
\put(64,87){$[m]+[l]$}
\put(100,78){$l$}

\put(5,53){0}
\put(32,53){0}
\put(4,58){$[m]+2[l]$}
\put(36,48){$l$}

\put(62,53){0}
\put(95,53){0}
\put(65,57){$[m]+3[l]$}
\put(100,48){$l$}

\put(35,23){0}
\put(63,23){0}
\put(36,28){$[m]+p[l]$}
\put(73,15){$l$}
\put(59,14){$p$}
\end{overpic}
\caption{Images of $m$ and $l$ via a gluing map $\#^2S^2\times S^1\to \#^2S^2\times S^1$.}
\label{1qlift}
\end{figure}
Furthermore, exchanging $m$ and $l$ in the last picture in Figure~\ref{1qlift} and doing an isotopy, we obtain a curve represented by $p[m]+[l]$ as in the first picture in Figure~\ref{fig:p/p+1} 
and \ref{fig:p/p+12}.
We call these cases Case (I) and Case (II) respectively.
Sliding a 0-framed 2-handle, we obtain the second picture.
The thin curves in the figures are represented by $p[m]+(p+1)[l]$.
By an isotopy we obtain the last pictures in Figure~\ref{fig:p/p+1} and \ref{fig:p/p+12}.
\begin{figure}[htbp]
\begin{overpic}
[
width=.8\textwidth
]
{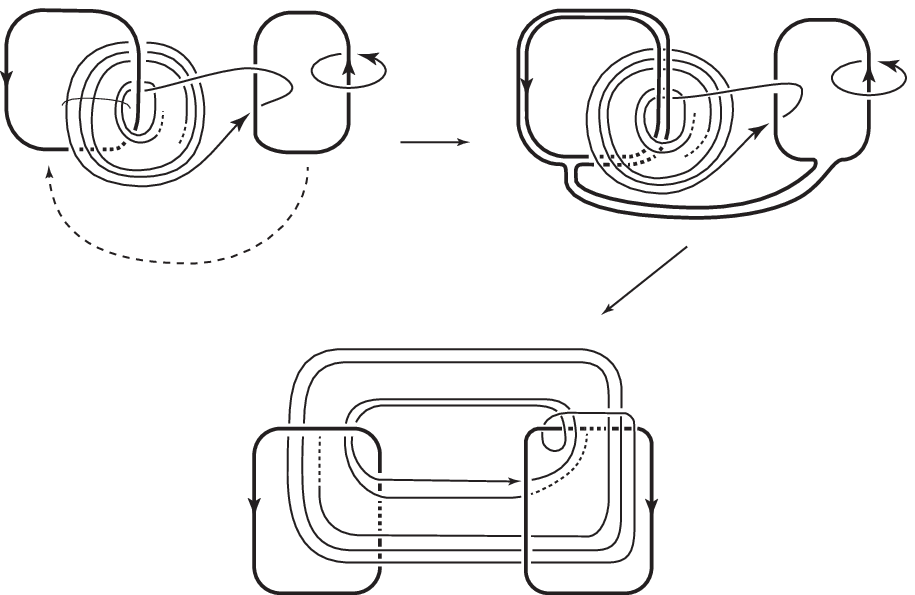}

\put(5,65){0}
\put(4,54){$p$}

\put(60,65){0}
\put(88,65){0}
\put(40,60){$l$}

\put(13,41){$p[m]+[l]$}

\put(33,65){0}
\put(73,8){0}
\put(24,8){0}
\put(97,60){$l$}
\put(70,31){isotopy}
\put(35,29){$p[m]+(p+1)[l]$}

\end{overpic}
    \caption{Case (I).}
    \label{fig:p/p+1}
\end{figure}
\begin{figure}[htbp]
\begin{overpic}
[
width=.8\textwidth
]
{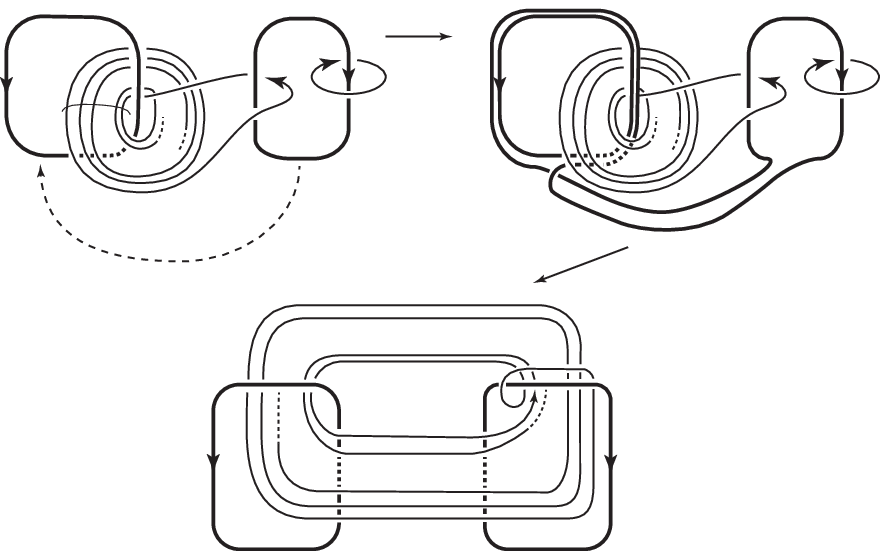}

\put(5,62){0}
\put(4,49){$p$}
\put(13,37){$p[m]+[l]$}

\put(60,62){0}
\put(88,62){0}
\put(41,56){$l$}
\put(30,62){0}

\put(21,8){0}
\put(70,8){0}
\put(97,56){$l$}
\put(68,30){isotopy}
\put(34,30){$p[m]+(p+1)[l]$}
\end{overpic}
    \caption{Case (II).}
    \label{fig:p/p+12}
\end{figure}


\section{Proof of Main theorem}
In this section we prove Theorem~\ref{main}.\\

\begin{proof}[Proof of Theorem~\ref{main}]
Let $e$ be an embedding $P\hookrightarrow M$ with a trivial cord.
The exterior $E(P_e)$ is obtained by attaching a 0-framed 2-handle on $E(S_e)$ in a separated position from the diagram of $E(S_e)$ as in the left picture of Figure~\ref{surgery}.
The circle $m_e$ in the figure is the image of meridian of $P$.
For example, when we describe $E(S_e)$ along the motion picture as in \cite[Section 6.2]{GS}, it is a meridian of a 1-handle corresponding to a 0-handle of the embedded sphere.
Hence, the pochette surgery on $M$ can be obtained by attaching an $\epsilon$-framed 2-handle on $E(P_e)$ plus a 3-handle and a 4-handle.
The position of the $\epsilon$-framed 2-handle is understood from the argument in Section~\ref{meridianposition}.
The right picture in Figure~\ref{surgery} is the local picture of the handle diagram of $M(e,1/q,\epsilon)$.

Here, we prove that the rightmost $0$-framed knot in Figure~\ref{surgery} is isotopic to the unknot in $\partial(E(S_e)\cup h^2(\epsilon))=S^3$,
where $h^2(\epsilon)$ is the $\epsilon$-framed 2-handle.
We remove the previous 3- and 4-handle in $M(e,1/q,\epsilon)$.
Since the obtained manifold is diffeomorphic to the $\epsilon$-Dehn surgery of $\partial E(S_e)$.
By several handle moves of $\partial E(P_e)$, we obtain the Hopf link surgery that the framing coefficients of the two components are $\langle0\rangle$ and $\langle\epsilon\rangle$.
Then we get the second picture in Figure~\ref{unknotproof}.
From this point, doing slides by $q$-times, we obtain the fifth picture.
Canceling the Hopf link component, we obtain $0$-framed knot as in the last picture in Figure~\ref{unknotproof}.
Hence, this 0-framed unknot is isotopic to the unknot.

Since we can move the 0-framed unknot in the last picture in Figure~\ref{surgery} to the unlink position in the same picture, we cancel this component with a 3-handle.
The remaining diagram is obtained by attaching an $\epsilon$-framed 2-handle and a 4-handle on $E(S_e)$.
Therefore, the resulting manifold is the trivial surgery or the Gluck surgery along $S_e$ depending on $\epsilon=0$ or $1$ respectively.
\begin{figure}[htbp]
\begin{overpic}
{1qsurgery}

\put(32,31){$0$}
\put(9,16){$m_e$}
\put(32,16){$l_e$}

\put(18,-3){$E(P_e)$}
\put(70,-3){$M(e,1/q,\epsilon)$}
\put(93,16){$q$}
\put(78,27){$\epsilon$}
\put(93,31){$0$}
\put(75,1){$\cup$ 3-handle, 4-handle}

\end{overpic}
\caption{Attaching $P$ on $E(P_e)$ with the trivial cord.}
\label{surgery}
\end{figure}
\begin{figure}[htbp]
\begin{overpic}
[width=.9\textwidth]
{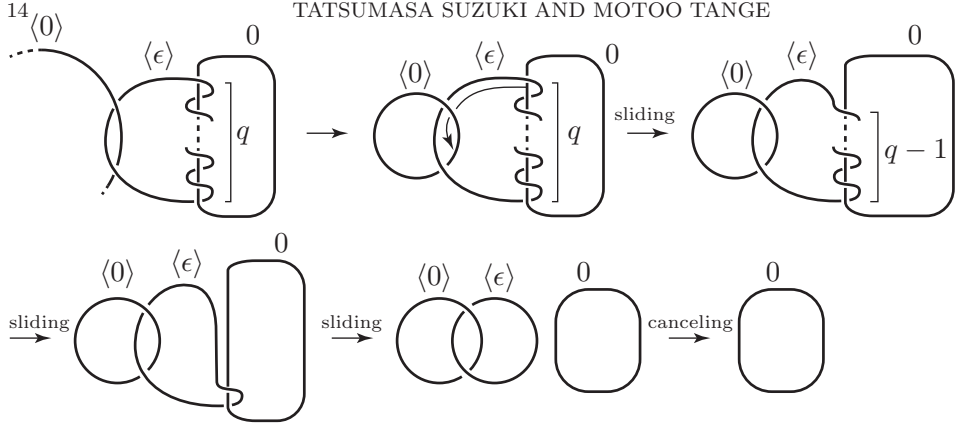}

\put(2,41){$\langle 0\rangle $}
\put(14,38){$\langle\epsilon\rangle$}
\put(24,30){$q$}
\put(25,40){$0$}

\put(41,36){$\langle 0\rangle$}
\put(49,38){$\langle\epsilon\rangle$}
\put(59,30){$q$}
\put(63,38){$0$}

\put(64,32){{\tiny sliding}}

\put(75,36){$\langle 0\rangle$}
\put(82,38){$\langle\epsilon\rangle$}
\put(95,40){$0$}
\put(92.5,28){$q-1$}

\put(0,10){{\tiny sliding}}

\put(9.5,15){$\langle 0\rangle$}
\put(17,16){$\langle \epsilon\rangle$}
\put(28,18){$0$}

\put(33,10){{\tiny sliding}}

\put(43,14.5){$\langle 0\rangle$}
\put(50,14.5){$\langle \epsilon\rangle$}
\put(60,15){$0$}

\put(67.5,10){{\tiny canceling}}

\put(80,15){$0$}

\end{overpic}
\caption{The isotopy type of the rightmost component.}
\label{unknotproof}
\end{figure}
\end{proof}

Using this theorem, we can prove Theorem~\ref{main2}.\\

\begin{proof}[Proof of Theorem~\ref{main2}]
Let $e$ be an embedding $P\hookrightarrow M$.
If $G(S_e)\cong {\mathbb Z}$ holds, then $\pi_1(E(S_e),\partial E(S))$ consists of one element.
This means that any cord in $E(S_e)$ is isotopic to the trivial cord.
Moving the embedded 1-handle in $P$ around the meridian $\partial D^2\times \{\text{pt}\}$ as an isotopy of $e$, we can make the linking number zero.
Hence, if the pochette surgery produces a homology 4-sphere, then the slope is $1/q$ for some meridian and longitude in $P$. 
From Theorem~\ref{main}, the result is $M$ (when $\epsilon=0$) or $Gl(S_e)$ (when $\epsilon=1$).

If $M$ is diffeomorphic to $S^4$ and $S_e$ is the unknot, then any cord is isotopic to the trivial one.
In the same way as above, any pochette surgery yielding a homology 4-sphere gives $S^4$.
\end{proof}

\section{Examples}
\label{examples}
\subsection{Pochette surgeries along ribbon 2-knots of 1-fusion.}
\label{1fusion}
In this section, we consider diffeomorphism types of pochette surgeries on the 4-sphere with non-trivial core spheres and non-trivial cords.
We take any ribbon 2-knot of 1-fusion as core spheres.
Let $S$ denote a ribbon 2-knot of 1-fusion in the 4-sphere.
The sphere $S$ is the double of a disk obtained by attaching one band over two 2-disks as presented by the left picture in Figure~\ref{fig:p/p+1surgery}.
The right diagram is the handle diagram of the complement of $S$.
Let $m'\subset \partial E(S_e)$ be the oriented meridian of a dotted 1-handle indicated in Figure~\ref{fig:p/p+1surgery} with a base point $p$.
Let $l'$ be an oriented meridian of the other dotted 1-handle passing $p$.
Pushing the complement (the dashed line in the right picture in Figure~\ref{fig:p/p+1surgery}) of the neighborhood of $l'$ in the interior of $E(S_e)$, we obtain a cord $C$.
Then the following holds.
\begin{figure}[htpb]
\begin{overpic}
{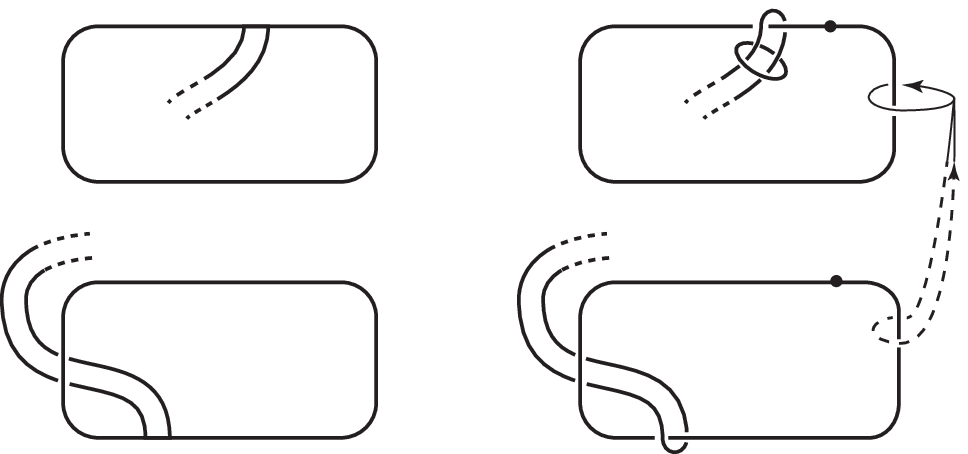}

\put(78,47){$0$}
\put(74,41){$0$}
\put(95,40){$m'$}
\put(99,11){$C$}
\put(101,33){$l'$}
\put(100,38){$p$}
\put(95,2){3-handle}

\end{overpic}
\caption{A ribbon 2-knot of 1-fusion (left) and the diagram of the complement of the 2-knot (right).}
\label{fig:p/p+1surgery}
\end{figure}

\begin{lem}
If $G(S_e)$ is not isomorphic to ${\mathbb Z}$, then this cord $C$ is non-trivial.
\end{lem}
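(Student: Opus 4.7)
The plan is to reduce non-triviality of the cord $C$ to a purely group-theoretic statement via the double coset bijection established earlier in this section, namely
$$\Pi_1(E(S_e),\partial E(S_e)) \;\cong\; G(S_e)/\!/\langle m'\rangle,$$
where $\langle m'\rangle$ is the boundary-subgroup. First I would identify the class of $C$ in this double coset. By construction $C$ is the meridian loop $l'$ of the second dotted $1$-handle, based at $p$, cut open near $p$ and pushed into the interior of $E(S_e)$; closing its endpoints back along the removed short piece of $l'$ in $\partial E(S_e)$ recovers the loop $l'$ itself. Hence the class of $C$ in $\Pi_1(E(S_e),\partial E(S_e))$ corresponds to the double coset $[[l']] \in G(S_e)/\!/\langle m'\rangle$.

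Next, triviality of $C$ is equivalent to $l' \in \langle m'\rangle$ as an element of $G(S_e)$, that is $l' = (m')^k$ for some integer $k$. Passing to $H_1(E(S_e)) \cong \mathbb{Z}$, which is generated by a meridian, both $m'$ and $l'$ are meridians of $S_e$ and so map to the same generator; thus $l' = (m')^k$ forces $k=1$, giving $l' = m'$ in $G(S_e)$. Now the handle diagram on the right of Figure~\ref{fig:p/p+1surgery} presents $G(S_e)$ with two generators $m'$ and $l'$ coming from the two dotted $1$-handles and one relator from the $0$-framed $2$-handle (the $3$-handle does not affect $\pi_1$). If $l' = m'$ held, then $G(S_e)$ would be generated by $m'$ alone and hence cyclic, which combined with $H_1(E(S_e)) \cong \mathbb{Z}$ forces $G(S_e) \cong \mathbb{Z}$. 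Taking the contrapositive yields the lemma: $G(S_e) \not\cong \mathbb{Z}$ implies $l' \neq m'$, hence $l' \notin \langle m'\rangle$, so $C$ is non-trivial.

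The step I expect to be the main obstacle is the first one: justifying cleanly the identification $[C] = [[l']]$ in the double coset. This requires careful bookkeeping of the pushed-off arc together with the choice of paths in $\partial E(S_e)$ joining the endpoints of $C$ to the basepoint of $\pi_1(E(S_e))$; there is latent ambiguity both by conjugation and by powers of $m'$ on either side, but the double coset quotient $\langle m'\rangle\backslash G(S_e)/\langle m'\rangle$ is precisely designed to absorb this ambiguity. Once this geometric identification is recorded, the remaining steps are routine abelianization and a one-line observation about the handle-group presentation.
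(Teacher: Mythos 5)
Your proposal is correct and takes essentially the same route as the paper: the paper likewise identifies the cord's class with the double coset $[[y]]$ of the longitude-generator, shows that the preimage of the trivial coset is exactly the boundary-subgroup $\langle x\rangle$, and concludes that triviality of $C$ would force $G(S_e)$ to be generated by the meridian alone, hence cyclic, hence isomorphic to $\mathbb{Z}$ by abelianization. Your extra step pinning down $k=1$ via $H_1$ is harmless but unnecessary, since $l'=(m')^k$ for any $k$ already makes $G(S_e)$ cyclic.
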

Recall the triviality of a cord was defined in Section~\ref{embP}.\\

\begin{proof}
The fundamental group $G(S_e)$ is presented by 
$$\langle x,y|wxw^{-1}y^{\pm1}\rangle,$$
where $x$ and $y$ are the elements presented by the meridian $m'$ and the longitude $l'$ respectively, and $w$ is a word obtained by reading $x,y$ along the 2-handle corresponding to the band.
Here the boundary-subgroup in $G(S_e)$ is $\langle x\rangle$.

Let $p:G(S_e)\to G(S_e)/\!/\langle x\rangle$ be the projection for the double coset.
Let $[[\text{id}]]$ be the trivial coset in $G(S_e)/\!/\langle x\rangle$, which is the coset including the identity element $\text{id}\in G(S_e)$.
The inverse image $p^{-1}([[\text{id}]])$ is equal to $\langle x\rangle$.
In fact $\langle x\rangle\subset p^{-1}([[\text{id}]])$ is clear.
For any $z\in  p^{-1}([[\text{id}]])$, there exist some integers $r,s$ such that $x^rzx^s=\text{id}$ is satisfied.
Then $z=x^{-r-s}\in \langle x\rangle$.

The homotopy class of the cord $C$ corresponds to $[[y]]\in G(S_e)/\!/{\mathbb Z}$.
If the cord $C$ is trivial, then $y\in p^{-1}([[\text{id}]])=\langle x\rangle$ holds.
Hence we have $y=x^n$ for some integer $n$.
This means $G(S_e)$ is an abelian group.
Since the abelianization of $G(S_e)$ is ${\mathbb Z}$, we have $G(S_e)\cong {\mathbb Z}$.
\end{proof}
In general, it is well-known that $G(S_e)\not\cong {\mathbb Z}$ is satisfied for many non-trivial 2-knot $S_e$.
Then the cord $C$ is non-trivial.

By using this cord $C$, we obtain an embedding $e:P\hookrightarrow S^4$ whose core sphere is $S$.
Then the handle diagram of the complement $E(P_e)$ of $P$ is Figure~\ref{fig:ribbonknot3}.
The meridian $m_e$ is isotopic to $l_1$ or $l_2$ in $E(S_e)$.
Here we assume that $m_e$ is isotopic to $l_i$.
Then, we put the orientation of the longitude as $[l_e]=-[l_i]$ in $E(P_e)$.
Then $[m_e]=-[l_e]$ in $H_1(E(S_e))$ is satisfied.
In this situation, the linking number of $P_e$ is $-1$.
Consider the $(p/(p+1),\epsilon)$-pochette surgery by using the embedding $e$ and these oriented meridian and longitude in $P$.
The element $y\in \pi_1(E(P_e))$ is a lift of $-[l_1]$ and $y^{-1}$ is a lift of $-[l_2]$, hence $y^{\pm1}$ is a lift of the longitude $l_e$.
\begin{lem}
\label{pitriviallemma}
$S^4(e,p/(p+1),\epsilon)$ is simply-connected.
\end{lem}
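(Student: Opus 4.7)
The plan is to apply the Seifert--van Kampen formula \eqref{piformula} to a presentation of $\pi_1(E(P_e))$ read off the diagram on the right of Figure~\ref{fig:p/p+1surgery}, and to show that the single added relator $c_{p,p+1}$ forces the resulting group to be cyclic. Once this is established, the group will coincide with its abelianization $H_1(S^4(e,p/(p+1),\epsilon))$, which equals $\Z/|p+(p+1)(-1)|\Z = 0$ by Proposition~\ref{homology pochette surgery of S4} (since $\ell=-1$ in the setup preceding the lemma); so the group will be trivial automatically.

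First I would identify $\pi_1(E(P_e))$ with $\pi_1(E(S_e))$. Because the cord $C$ is a properly embedded arc in $E(S_e)$, a Seifert--van Kampen argument applied to $E(S_e) = E(P_e) \cup N(C)$, with $N(C)\cong D^4$ and intersection a simply-connected $S^2 \times I$, shows that the two fundamental groups agree. Hence
$$\pi_1(E(P_e)) \;\cong\; \langle x,y \mid wxw^{-1}y^{\epsilon_2}\rangle$$
for some sign $\epsilon_2\in\{\pm1\}$, with $x,y$ the meridians of the two dotted $1$-handles. The pochette meridian $m_e$ is a meridian of $S_e$ and so lifts to $x$ (up to conjugation); the orientation choice $[l_e]=-[l_i]$ from Section~\ref{1fusion}, together with the linking number $\ell=-1$ computed in Section~\ref{linkinnumber}, identifies the longitude $l_e$ with $y^{\epsilon_1}$, where $\epsilon_1\in\{\pm1\}$ is the sign making its image in $H_1$ equal to $-[m_e]$.

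The heart of the argument is the explicit form of $c_{p,p+1}$. Substituting $q=p+1$ into the formula of Section 2.2, the floor differences $\lfloor k(p+1)/p\rfloor-\lfloor(k-1)(p+1)/p\rfloor$ equal $1$ for $1\le k\le p-1$ and equal $2$ for $k=p$, so
$$c_{p,p+1} \;=\; (l'm')^{p-1}\,l'^{\,2}\, m' \;=\; (y^{\epsilon_1}x)^{p-1}\,y^{2\epsilon_1}\,x.$$
Setting $u := y^{\epsilon_1}x$, the relator rewrites as $u^{p-1}\,y^{\epsilon_1}\,u = 1$, from which $y = u^{-\epsilon_1 p}$ and hence $x = y^{-\epsilon_1}u = u^{p+1}$. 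Thus both generators of $\pi_1(S^4(e,p/(p+1),\epsilon))$ lie in the cyclic subgroup $\langle u\rangle$, so the group is cyclic; combined with the vanishing of $H_1$ noted in the first paragraph, this forces $\pi_1 = 1$.

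The main bookkeeping obstacle will be the interplay of the two signs $\epsilon_1$ and $\epsilon_2$ (and the ambiguity in which of $l_1,l_2$ equals $m_e$), but the elimination above produces a cyclic group regardless of these signs, so Proposition~\ref{homology pochette surgery of S4} uniformly closes the argument.
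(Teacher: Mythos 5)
Your proof is correct and takes essentially the same approach as the paper: both reduce the presentation $\langle x,y \mid wxw^{-1}y^{\pm1},\, c_{p,p+1}\rangle$ to a cyclic group by showing the surgery relator alone forces $x$ and $y$ into the cyclic subgroup generated by $z=xy^{\pm1}$ (your $u$, up to cyclic permutation of the relator). The only cosmetic difference is the final step: the paper kills the generator directly via the abelianized knot-group relator $xy^{\pm1}=z^{p+1}z^{-p}=z=\mathrm{id}$, whereas you invoke $H_1=0$ from Proposition~\ref{homology pochette surgery of S4} with $\ell=-1$; these amount to the same computation.
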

\begin{proof}
The presentation of $\pi_1(S^4(e,p/(p+1),\epsilon))$ is
$$\langle x,y|wxw^{-1}y^{\pm1},y^{\pm1}(xy^{\pm1}
)^p\rangle$$
according to (\ref{piformula}).
Here we use the rule that the double sign is in the same order.
When we put as $z=xy^{\pm1}$, from the second relation, we obtain $x=z^{p+1}$.
Thus $y^{\pm1}=x^{-1}z=z^{-p}$.
This means the fundamental group above is an abelian group generated by $z$.
Then the first relator $xy^{\pm1}=z^{p+1}z^{-p}=z=\text{id}$ is satisfied.
Therefore, this group is trivial.
\end{proof}
According to the last pictures in Figure~\ref{fig:p/p+1} and \ref{fig:p/p+12} in Section~\ref{meridianposition},
the cases (I) and (II) in Figure~\ref{p/(p+1)surgery2} are obtained as results of attaching $P$ along $p[m_e]+(p+1)[l_e]$ with the mod 2 framing $\epsilon$.
The case (I) is the one which $m_e$ is isotopic to $l_1$ (as an oriented loop), while (II) is the case where $m_e$ is isotopic to $l_2$ in the same way.

\begin{figure}[htpb]
\begin{overpic}
{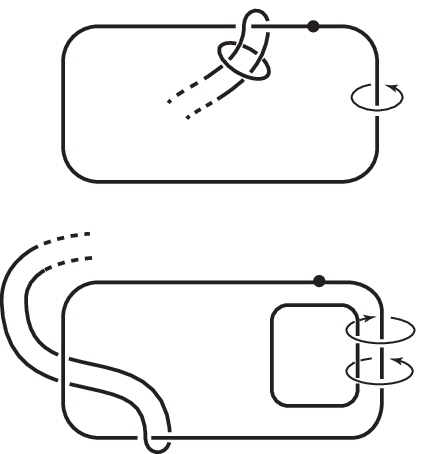}

\put(48,98){$0$}
\put(42,87){$0$}
\put(95,77){$m_e$}
\put(95,25){$l_1$}
\put(95,15){$l_2$}
\put(95,2){2 3-handles}
\put(50,15){$0$}
\end{overpic}
\caption{The pochette complement whose core sphere is a ribbon 2-knot of 1-fusion.}
\label{fig:ribbonknot3}
\end{figure}

\begin{figure}[htpb]
\begin{overpic}
[width=.8\textwidth]
{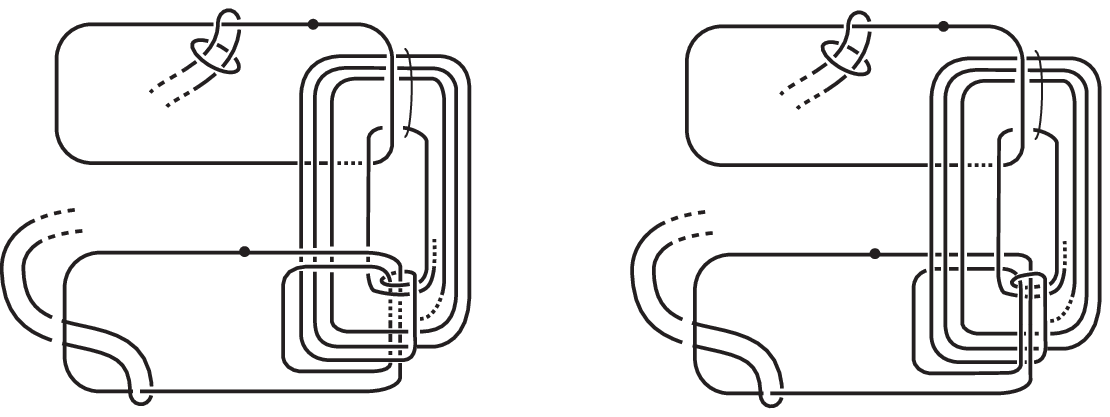}

\put(15,0){The case (I).}
\put(33,3){2 3-handles}

\put(17,41){$0$}
\put(37,39){$p$ strings}
\put(45,30){$\epsilon$}
\put(23,10){$0$}
\put(15,36){$0$}

\put(74,41){$0$}
\put(93,39){$p$ strings}
\put(101,30){$\epsilon$}
\put(79,10){$0$}
\put(71,36){$0$}
    
\put(73,0){The case (II).}

\put(90,3){2 3-handles}
\end{overpic}
\caption{(I): $m_e$ is isotopic to $l_1$, (II): $m_e$ is isotopic to $l_2$.}
\label{p/(p+1)surgery2}
\end{figure}

Lemma~\ref{pitriviallemma} implies that $S^4(e,p/(p+1),\epsilon)$ is a homotopy 4-sphere, but actually the following holds:
\begin{prop}
$S^4(e,p/(p+1),\epsilon)$ is diffeomorphic to the double of a contractible 4-manifold without no 3-handles.
\end{prop}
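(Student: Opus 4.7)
The plan is to manipulate the handle diagram of $S^4(e,p/(p+1),\epsilon)$ shown in Figure~\ref{p/(p+1)surgery2} to exhibit it explicitly as a double $W\cup_\partial(-W)$, where $W$ is contractible and built from only 0-, 1-, and 2-handles.

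First I would tally the handles appearing in the figure: one 0-handle, two 1-handles (the dotted circles coming from the ribbon 1-fusion structure of $S$), four 2-handles (the three 0-framed handles inherited from $E(P_e)$ together with the $\epsilon$-framed handle wrapping $p$ times as analyzed in Section~\ref{meridianposition}), two 3-handles, and one 4-handle. Since $\chi = 2$, a candidate double $W \cup (-W)$ forces $W$ to contain exactly two 1-handles and two 2-handles, so the target structure must single out two of the four 2-handles as geometric meridians of the dotted 1-handles (providing the upside-down $-W$ piece together with the two 3-handles and the 4-handle).

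The core step would be a sequence of handle slides of the $\epsilon$-framed handle over the 0-framed band handle and the 0-framed cord handle, using the $p$ strands running through the 1-handles, chosen so that two of the four 2-handles separate off as unlinked geometric meridians of the dotted circles. The complementary handles — the 0-handle, two 1-handles, and the two remaining 2-handles — then form the putative $W$. To verify $W$ is contractible, I would read off $\pi_1(W)$ from the residual diagram: the two retained 2-handles furnish exactly the relators $wxw^{-1}y^{\pm 1}$ and $y^{\pm 1}(xy^{\pm 1})^p$ from the proof of Lemma~\ref{pitriviallemma}, and the same substitution $z = xy^{\pm 1}$ used there trivialises $\pi_1(W)$. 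Combined with $\chi(W) = 1 - 2 + 2 = 1$ and the fact that $W$ has no 3-handles by construction, $W$ is contractible.

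The main obstacle is certifying the explicit slide sequence: after the $p$ slides, one must verify that the two 2-handles earmarked as meridians link each of the correct 1-handles exactly once and are otherwise unknotted and unlinked, so that they genuinely form the dual half of a double. This has to be checked uniformly in $p$ and $\epsilon$ and separately in Case (I) and Case (II) of Figure~\ref{p/(p+1)surgery2}; by the $l_1 \leftrightarrow l_2$ symmetry between the two cases, I expect the same slide pattern to work in both, up to an orientation swap, so a single uniform argument should cover all parameter choices.
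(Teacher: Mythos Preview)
Your overall strategy matches the paper's: perform handle slides on the diagram in Figure~\ref{p/(p+1)surgery2} until it visibly splits as a double $W\cup(-W)$, with $W$ built from one $0$-handle, two $1$-handles and two $2$-handles, and then check $\pi_1(W)=1$ via the computation of Lemma~\ref{pitriviallemma}. The paper does exactly this, treating Case~(II) by an explicit sequence of slides in Figures~\ref{fig:p/p+1surgery4} and~\ref{fig:p/p+1surgery5} and remarking that Case~(I) is analogous.

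But your target configuration is wrong, and this is not a cosmetic slip. In the Kirby picture of a double $D(W)$, the upside-down half $-W$ contributes its $2$-handles as $0$-framed meridians of the \emph{$2$-handles} of $W$, not of the dotted $1$-handles. A $0$-framed meridian of a dotted circle is a cancelling partner for that $1$-handle; arranging two of your four $2$-handles in that position does not exhibit a double, and the remaining pieces would not reassemble into $-W$. What actually needs to happen---and what the paper's slides achieve---is that two of the $0$-framed $2$-handles are moved into position as $0$-framed meridians of the band $2$-handle and of the $\epsilon$-framed $2$-handle respectively; those two $2$-handles remain in $W$ and contribute precisely the relators $wxw^{-1}y^{\pm1}$ and $y^{\pm1}(xy^{\pm1})^p$ you quote. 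Correspondingly, the slides in the paper move the $0$-framed handles (over the band handle and across the dotted circles) rather than the $\epsilon$-framed one. Once you redirect the destination of the slides, the rest of your outline---the $\pi_1$ computation and the contractibility conclusion---goes through as written.
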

\begin{proof}
Here we will consider the case where $m_e$ is isotopic to $l_2$.
The case where $m_e$ is isotopic to $l_1$ can be proved in the same way.

We deform the handle diagram of (II) as in Figure~\ref{fig:p/p+1surgery4}.
Continuously, we deform the handle diagram according to Figure~\ref{fig:p/p+1surgery5}.
The last picture presents that $S^4(e,p/(p+1),\epsilon)$ is the double of a contractible 4-manifold $C$.
In fact, the fundamental group of $C$ has the same presentation as the one used in the proof of Lemma~\ref{pitriviallemma}, that is, it is the trivial group.
The homology group of $C$ is easily found out to be trivial from the handle decomposition.
Therefore, $C$ is contractible.
\end{proof}

\begin{figure}[htpb]
\begin{overpic}
{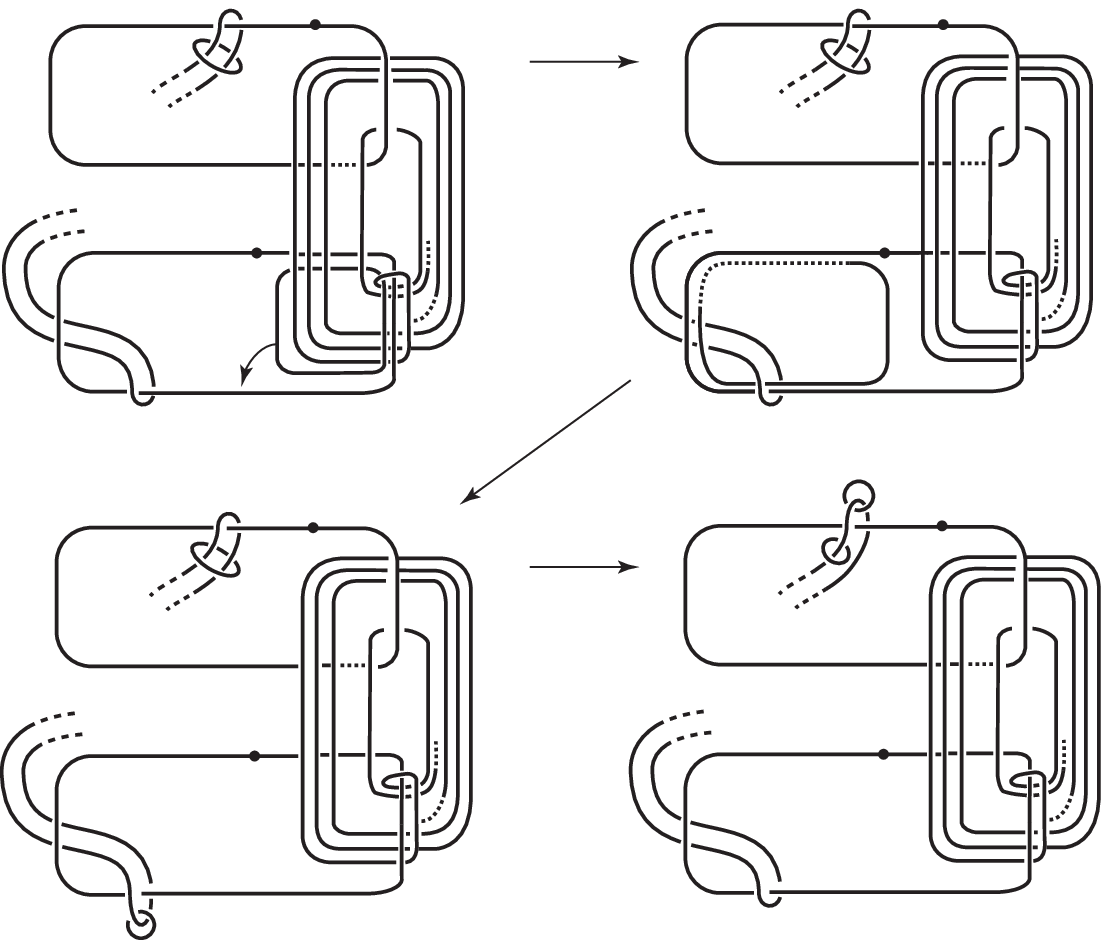}

\put(18,85){0}
\put(15,80){0}
\put(24,72){$\epsilon$}
\put(22,56){0}

\put(43.5,75){handle slide}

\put(74,84){0}
\put(72,80){0}
\put(81,72){$\epsilon$}
\put(77,56){0}

\put(42,52){handle slide}

\put(18,39){0}
\put(15,34){0}
\put(80,40){0}
\put(24,27){$\epsilon$}
\put(9,0){0}

\put(44,36){isotopy \&}
\put(43.5,29){handle slide}

\put(73,39){0}
\put(72,34){0}
\put(81,27){$\epsilon$}

\end{overpic}
\caption{Handle moves.}
\label{fig:p/p+1surgery4}
\end{figure}

\begin{figure}[htpb]
\begin{overpic}
{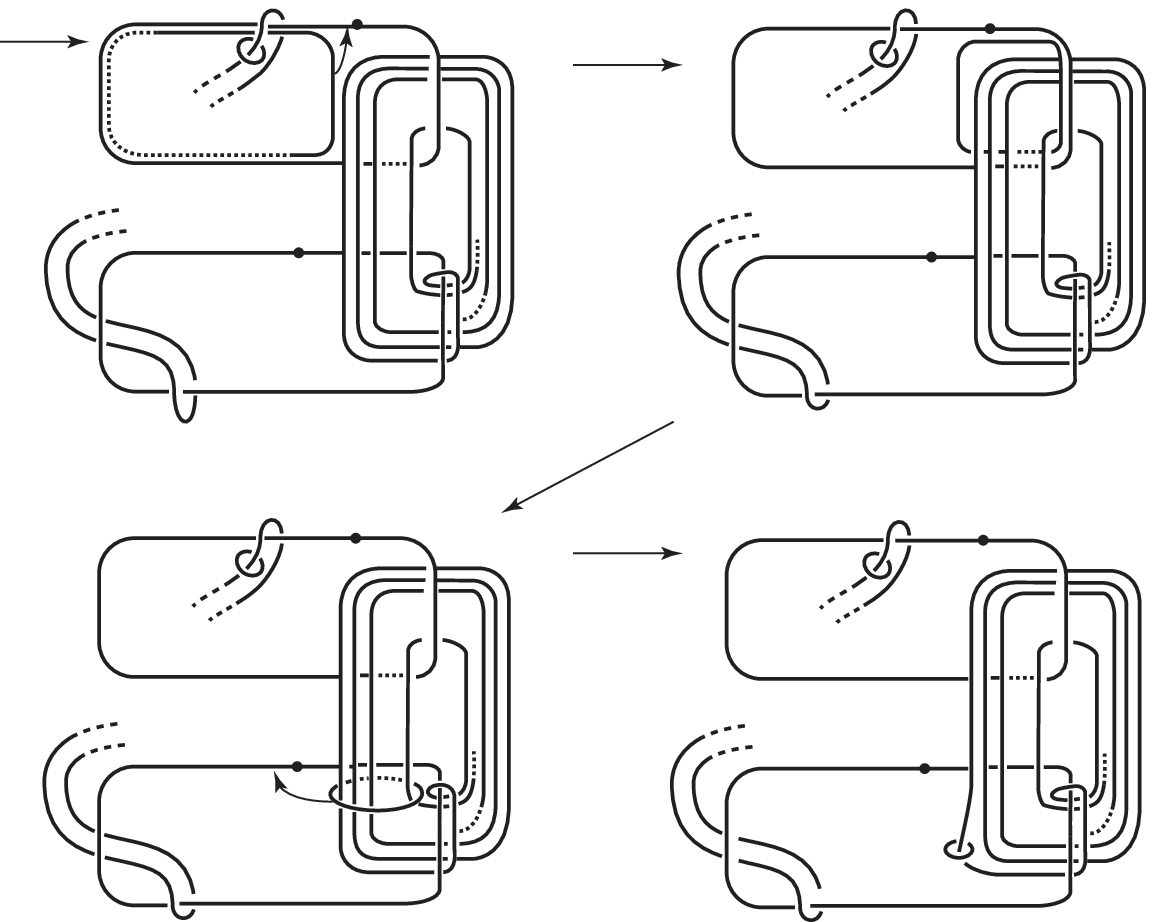}

\put(0,79){handle slide}

\put(20,79){0}
\put(18,74){0}

\put(26,70){0}
\put(27,60){$\epsilon$}

\put(45.5,70){handle slide}

\put(75,79){0}
\put(73,74){0}

\put(80,70){0}
\put(82,60){$\epsilon$}

\put(48,45){isotopy}

\put(20,34){0}
\put(18,30){0}

\put(27,7){0}
\put(27,16){$\epsilon$}

\put(45.5,28){handle slide}
\put(74,34){0}
\put(72,30){0}

\put(79.5,5){0}
\put(82,16){$\epsilon$}

\end{overpic}
\caption{Handle moves.}
\label{fig:p/p+1surgery5}
\end{figure}

As an example, we can give a concrete diagram for the spun trefoil knot as a ribbon 2-knot of 1-fusion.
Figure~\ref{fig:ribbonknot} is the handle diagram of the complement.
\begin{figure}[htbp]
    \centering
    \begin{overpic}[width=.25\textwidth]{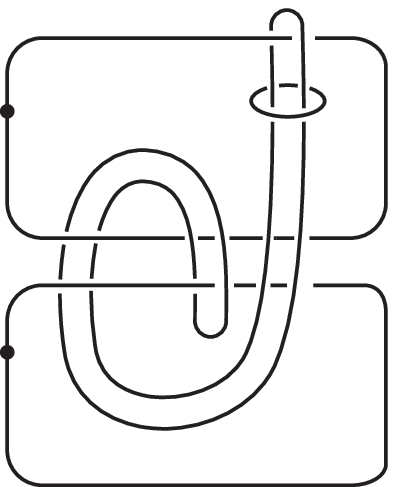}
    \put(45,80){0}
    \put(50,98){0}
\put(85,13){$\cup$ 3-handle}
\put(94,4){4-handle}
\end{overpic}
\caption{A handle diagram of a ribbon 2-knot exterior.}
    \label{fig:ribbonknot}
\end{figure}
We choose $m_e$ and $l_e$ as in the left in Figure~\ref{nontrivial}, then the embedding $i:\partial P_e\hookrightarrow E(P_e)$ gives $i_\ast([l_e])=-[m_e]$.
Namely the linking number is $\ell=-1$.
Let $x,y$ be lifts in $\pi_1(S^4(e,1/2,\epsilon))$ of generators $m_e$ and $l_e$ respectively.
Then the presentation of $\pi_1(S^4(e,1/2,\epsilon))$ is the following: 
$$\langle x,y|yx^{-1}yxy^{-1}x,y^2x\rangle\cong \{\text{id}\}.$$
The diagram of this homotopy 4-sphere becomes the right picture in Figure~\ref{nontrivial}.
In this case, we can deform this diagram into the double of a contractible 4-manifold with no 3-handles as in Figure~\ref{ribboncase2}.
\begin{figure}[htbp]
\begin{overpic}
[width=.7\textwidth]
{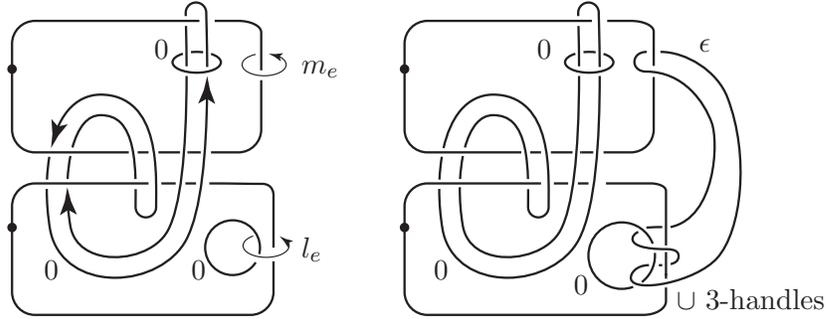}

\put(20,35){$0$}
\put(5,5){$0$}
\put(25 ,5){$0$}
\put(40,33){$m_e$}
\put(40,8){$l_e$}

\put(72,35){$0$}
\put(58,5){$0$}
\put(94,36){$\epsilon$}
\put(77,3){$0$}
\put(91,1){$\cup$ 3-handles}
\end{overpic}
\caption{A pochette surgery $S^4(e,1/2,\epsilon)$ with a non-trivial 2-knot $S_e$ and a non-trivial cord.}
\label{nontrivial}
\end{figure}
\begin{figure}[htbp]
\begin{overpic}
[width=.7\textwidth]
{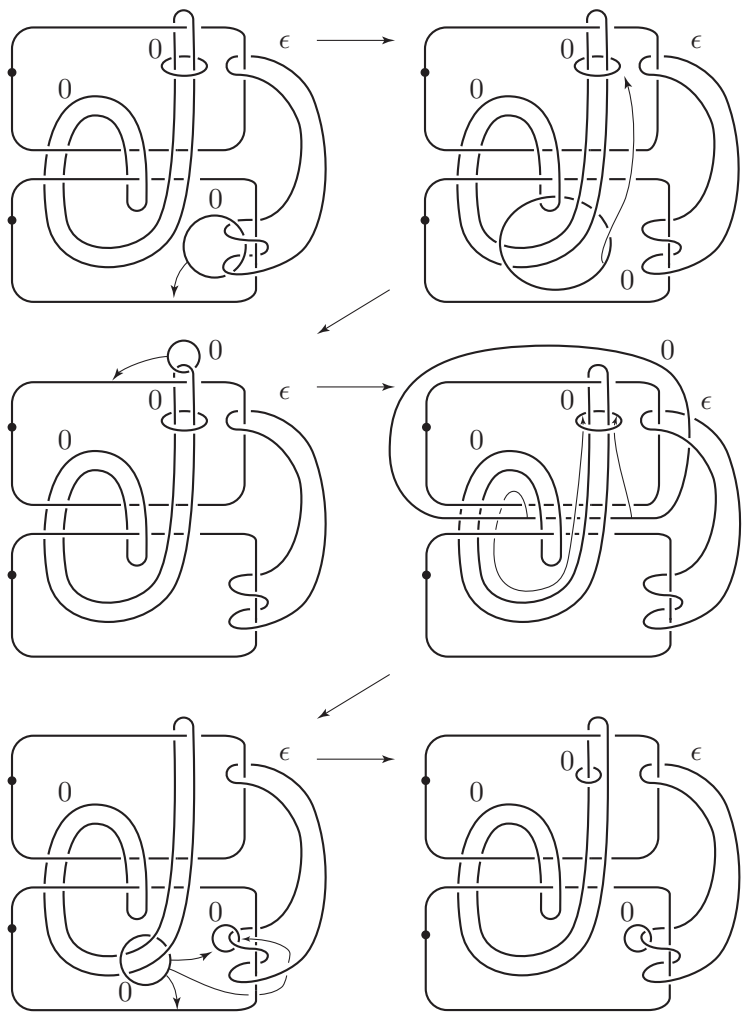}

\put(5,91){$0$}
\put(14,95){$0$}
\put(27,96){$\epsilon$}
\put(20,80){$0$}
 
\put(46,91){$0$}
\put(55,95){$0$}
\put(68,96){$\epsilon$}
\put(61,72){$0$}

\put(5,56){$0$}
\put(14,60){$0$}
\put(27,61){$\epsilon$}
\put(20,65){$0$}

\put(46,56){$0$}
\put(55,60){$0$}
\put(69,60){$\epsilon$}
\put(65,65){$0$}

\put(5,21){$0$}
\put(11,1){$0$}
\put(27,25){$\epsilon$}
\put(20,9){$0$}

\put(46,21){$0$}
\put(55,24){$0$}
\put(68,25){$\epsilon$}
\put(61,9){$0$}
\end{overpic}
\caption{A diffeomorphism to the double of a contractible 4-manifold.}
\label{ribboncase2}
\end{figure}

\begin{thm}
\label{non-trivial knot and cord}
Let $e:P\hookrightarrow S^4$ be any ribbon 2-knot of 1-fusion with $\pi_1(E(S_e))\not\cong {\mathbb Z}$.
Then there exists a non-trivial cord for the pochette such that 
the pochette surgery $S^4(e,p/(p+1),\epsilon)$ is diffeomorphic to $S^4$.
\end{thm}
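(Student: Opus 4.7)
The plan is to combine the setup of this subsection with one general piece of handle calculus about doubles. Since we assume $\pi_1(E(S_e)) \not\cong \mathbb{Z}$, the lemma earlier in this subsection guarantees that the cord $C$ displayed in Figure~\ref{fig:p/p+1surgery} is non-trivial, so the embedding $e : P \hookrightarrow S^4$ arising from $S$ and $C$ has both non-trivial core sphere and non-trivial cord, which is what the statement requires on the input side. By Lemma~\ref{pitriviallemma}, the pochette surgery $S^4(e, p/(p+1), \epsilon)$ is simply connected, and by the preceding proposition it is diffeomorphic to the double $DC$ of a compact contractible 4-manifold $C$ whose handle decomposition uses only 0-, 1-, and 2-handles.

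It therefore suffices to show that such a double is diffeomorphic to $S^4$. I would verify this by continuing the handle calculus begun in Figures~\ref{fig:p/p+1surgery4} and \ref{fig:p/p+1surgery5}, using the canonical structure of the double: each 2-handle of $C$ acquires a dual 2-handle from $-C$ attached along its belt circle with framing $0$, and each 1-handle of $C$ acquires a dual 3-handle. Contractibility of $C$ forces the numbers of 1- and 2-handles to coincide (from $\chi(C)=1$). The dual 2-handles serve as canceling complements for $C$'s 2-handles, so after successively sliding and canceling them one is left with $C$'s 0-handle and 1-handles together with the dual 3- and 4-handles; the remaining 1-handles then cancel in pairs with their dual 3-handles, leaving only the 0-handle and 4-handle, which is the standard handle decomposition of $S^4$.

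The main obstacle is to ensure that each dual 2-handle acts as a \emph{geometric} canceling handle for the corresponding 2-handle of $C$, and not merely an algebraic one, once the slides needed to simplify $C$ have been performed. Equivalently, one must check that the moves required inside $C$ admit a compatible mirror image inside $-C$, so that the belt-circle attaching data of the dual 2-handles is preserved. The explicit cancellation exhibited for the spun trefoil in Figure~\ref{ribboncase2} is a representative case, and I expect the extension to an arbitrary ribbon word $w$ in the presentation $\langle x, y \mid w x w^{-1} y^{\pm 1}\rangle$ to be essentially formal, because the symmetric structure of $DC$ and the specific attaching description coming from the proof of the preceding proposition depend on $w$ only through its algebraic shape and not on which knot $w$ encodes.
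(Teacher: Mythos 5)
Your reduction is the same as the paper's: the cord of Figure~\ref{fig:p/p+1surgery} is non-trivial by the lemma, and the preceding proposition identifies $S^4(e,p/(p+1),\epsilon)$ with the double $D(C)$ of a contractible $4$-manifold $C$ built from a $0$-handle, two $1$-handles and two $2$-handles. The gap is in your final step, where you claim $D(C)\cong S^4$ by ``essentially formal'' four-dimensional handle calculus. First, the cancellation pattern you propose is not available: a dual $2$-handle cannot cancel a $2$-handle of $C$, since handles cancel only in adjacent indices. What the $0$-framed meridians $\mu_i$ actually permit is changing crossings of the attaching circles $K_i$ by sliding $K_i$ over $\mu_i$; pushing each $K_i$ off everything in this way leaves a split Hopf link $K_i\cup\mu_i$ with framings $(k_i\bmod 2,\,0)$, i.e.\ an $S^2\times S^2$ or $\mathbb{CP}^2\#\overline{\mathbb{CP}^2}$ summand, not a cancelling pair. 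Second, the difficulty you flag yourself --- upgrading algebraic to geometric cancellation of the $1$--$2$ pairs --- is exactly the step that fails in dimension four in general: for a Mazur-type contractible $C$ with $\partial C\not\cong S^3$ the $2$-handle can never geometrically cancel the $1$-handle inside $C$, yet $D(C)\cong S^4$ still holds. So the fact you need is true, but not for the reason you give, and no amount of mirroring slides between $C$ and $-C$ makes it formal.

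The paper's route, stated immediately after its proof following \cite[p.~36]{A}, is five-dimensional: $D(C)=\partial(C\times I)$, and $C\times I$ is a $5$-manifold with handles of index at most $2$. In a $5$-manifold the attaching circles of $2$-handles are determined up to isotopy by their homotopy classes (general position for circles in the $4$-dimensional boundary), so the algebraic $1$--$2$ cancellation coming from contractibility of $C$ can be realized geometrically there; hence $C\times I\cong D^5$ and $D(C)=\partial D^5=S^4$. Replacing your last two paragraphs by this argument (or an explicit citation of it) completes the proof.
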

\begin{proof}
We take the same cord as the previous one used in Figure~\ref{fig:ribbonknot3}.
The double of any contractible 4-manifold $C$ with $2$ 1-handles, $2$ 2-handles and no 3-handles is diffeomorphic to the standard 4-sphere. 
Therefore, $S^4(e,p/(p+1),\epsilon)$ is diffeomorphic to the standard 4-sphere.
\end{proof}
Here we used the method in \cite[the second paragraph in p.\ 36]{A}.
If a contractible 4-manifold $C$ has $n$ 1-handles, $n$ 2-handles and no 3-handles, then the double satisfies $D(C):=C\cup_{\text{id}}(-C)=\partial (C\times I)$.
Since the handle decomposition of $C\times I$ depends only on the homotopy classes of the 2-handles, $C\times I$ is diffeomorphic to the standard $D^5$.

Here we prove Theorem~\ref{intro:1-fusion}.
\begin{proof}[Proof of Theorem~\ref{intro:1-fusion}]
Let $S$ and $C$ be the ribbon 2-knot and the cord that we dealt with in Theorem~\ref{non-trivial knot and cord}.
Then $S$ is non-trivial and $C$ is non-trivial.
The pochette surgery gives the standard $S^4$.
\end{proof}
\subsection{Pochette surgeries along ribbon 2-knots of \texorpdfstring{$n$}{TEXT}-fusion.}
\label{nfusion}
The method to prove Theorem~\ref{non-trivial knot and cord} can be easily extended to the case of the surgery that the core sphere is any ribbon 2-knot of $n$-fusion.
It is unclear whether any result by this pochette surgery is simply-connected or not.
\begin{thm}
\label{non-trivial knot and cord2}
Let $S\subset S^4$ be a ribbon 2-knot with $G(S)\not\cong {\mathbb Z}$.
Then there exists a non-trivial cord $C$ in $E(S)$ satisfying the following conditions:
\begin{enumerate}
\item the core sphere is $S$,
\item the cord is $C$, and 
\item if $S^4(e,g)$ is a homology 4-sphere then it is diffeomorphic to the double of a homology 4-ball without 3-handles.
\end{enumerate}
\end{thm}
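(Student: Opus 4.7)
The plan is to generalize the construction from Theorem~\ref{non-trivial knot and cord}. For a ribbon 2-knot $S$ of $n$-fusion, the complement $E(S)$ admits a handle decomposition with $n+1$ dotted 1-handles (from the $n+1$ ribbon disks) together with $n$ band 2-handles and some 3-handles. I would first specify the cord $C$: pick a dotted 1-handle other than the ``basepoint'' one, take its meridian in $\partial E(S)$, and push it into the interior to produce a properly embedded arc with endpoints on $\partial E(S)$. This is the direct generalization of the cord in Figure~\ref{fig:p/p+1surgery}.

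Next I would verify that $C$ is non-trivial by the same argument as in the 1-fusion case. The fundamental group $G(S)$ admits a Wirtinger-type presentation $\langle x_0,x_1,\dots,x_n\mid r_1,\dots,r_n\rangle$ with all $x_i$ meridians and boundary subgroup $\langle x_0\rangle$ (after choosing $x_0$ as the basepoint meridian). The cord $C$ represents the class $[[x_i]]\in G(S)/\!/\langle x_0\rangle$ for the chosen $i$. If $[[x_i]]=[[\mathrm{id}]]$ then $x_i\in\langle x_0\rangle$, and the argument in the 1-fusion lemma adapts verbatim to force all $x_j$ into $\langle x_0\rangle$, making $G(S)$ abelian and hence $\cong\mathbb{Z}$, contradicting the hypothesis.

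With the cord fixed, I would next draw the handle diagram of $E(P_e)$: it is the handle diagram of $E(S)$ with one extra 0-framed 2-handle (for the pochette's meridian disk) running parallel to the chosen dotted 1-handle. Now consider an arbitrary pochette surgery $S^4(e,g)$ whose result is a homology 4-sphere. By Proposition~\ref{homology pochette surgery of S4} this forces $|p+q\ell|=1$ (after normalizing so the linking number $\ell$ has the appropriate sign coming from the cord choice). Attaching $P$ to $E(P_e)$ via $g$ corresponds to attaching a 2-handle along the curve representing $p[m_e]+q[l_e]$ with framing $\epsilon$, then one 3- and one 4-handle, as described in Section~\ref{meridianposition}.

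The main step, and the main obstacle, is the handle calculus that simultaneously cancels the 3-handles. I would proceed as in Figures~\ref{fig:p/p+1surgery4}--\ref{fig:p/p+1surgery5}: use the new 2-handle (of slope determined by $|p+q\ell|=1$) together with the band 2-handles of the ribbon presentation to successively cancel each 3-handle against a 2-handle, pushing the cancellation through the handle slides forced by the continued fraction expansion of $p/q$. The hypothesis $|p+q\ell|=1$ is precisely what makes one of these 2-handles geometrically cancel a 1-handle, triggering the cascade. After all 3-handles are eliminated, the remaining handle diagram has the same number of 1- and 2-handles and admits an obvious involution identifying it with a double; call the resulting 4-manifold-with-boundary $C$. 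The assumption that $S^4(e,g)=D(C)$ is a homology 4-sphere, together with the long exact sequence of $(D(C),C)$, forces $H_\ast(C)=H_\ast(\mathrm{pt})$, so $C$ is a homology 4-ball without 3-handles, completing the proof.
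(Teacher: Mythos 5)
Your outline tracks the paper's strategy (generalize the 1-fusion construction: cord from a meridian of another dotted 1-handle, non-triviality via the double coset, then handle calculus), but there are two genuine gaps. The first is in the choice of cord. You pick an \emph{arbitrary} dotted 1-handle $i\neq 0$ and claim that $x_i\in\langle x_0\rangle$ would "force all $x_j$ into $\langle x_0\rangle$ verbatim." That implication is false for $n\geq 2$: one meridian can lie in $\langle x_0\rangle$ while others do not, so your cord may well be trivial even though $G(S)\not\cong{\mathbb Z}$. The argument must be run in the other direction: the meridians of the dotted 1-handles generate $G(S)$, so if \emph{every} one of them lay in $\langle x_0\rangle$ then $G(S)$ would be a quotient of ${\mathbb Z}$, hence (by abelianization) isomorphic to ${\mathbb Z}$; since it is not, \emph{some} meridian $l'$ satisfies $[l']\notin\langle x_0\rangle$, and the cord must be built from \emph{that} 1-handle. (One also needs that $[l']$ is conjugate to $x_0$ — true because the fusion graph is a connected tree and the band relators are conjugation relations — to get linking number $\pm1$ and hence pin down the homology-sphere slopes.)

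The second and more serious gap is the central handle calculus. You propose to "successively cancel each 3-handle against a 2-handle" in a "cascade" triggered by $|p+q\ell|=1$, but no mechanism is given, and a homological condition does not produce geometric handle cancellations in dimension 4; this is exactly the step that cannot be asserted. Moreover it aims at the wrong structural goal: the conclusion is that $S^4(e,g)$ is the \emph{double} $D(H)$ of a homology 4-ball $H$ \emph{whose handle decomposition} has no 3-handles — the total diagram of $D(H)$ still contains 3-handles, namely the duals of the 1-handles of $H$, and they are recognized rather than cancelled. The paper's proof instead slides the 0-framed belt 2-handle and the 0-framed dual-band 2-handles into 0-framed meridian position on the $\epsilon$-framed and band 2-handles (generalizing Figures~\ref{fig:p/p+1surgery4}--\ref{fig:p/p+1surgery5} by transporting the 0-framed 2-handle along the fusion tree as in Figure~\ref{graph2}); once every 2-handle of the sub-handlebody $H$ carries a 0-framed meridian, the diagram is manifestly $D(H)$ with $H$ built from 0-, 1- and 2-handles only. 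Your final step (that $D(H)$ being a homology sphere forces $H$ to be a homology ball) is fine, but without a concrete replacement for the sliding argument the proof does not go through.
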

\begin{proof}
Let $S$ be any ribbon 2-knot of $n$-fusion.
We fix the handle decomposition of $E(S)$ corresponding to the fusion.
That is, the decomposition has one 0-handle, $n+1$ dotted 1-handles, $n$ 2-handles and $n$ dual 2-handles and $n+1$ 3-handles and one 4-handle.
See \cite[Section 6.2]{GS} for the description of ribbon 2-knot complement.
We take two based meridians $m'$ and $l'$ of the dotted 1-handles with a base point $p_0\in \partial E(S)$.
We suppose that $m'$ lies in $\partial E(S)$ and is a meridian of $\partial E(S)$.
Let $x,y$ be elements in $\pi_1(E(S))$ corresponding to $m'$ and $l'$ respectively.
Here we can assume that $y^{\pm1}$ is conjugate to $x$ but $y^{\pm1}\not\in \langle x\rangle$.
Actually, if any based meridian of each dotted 1-handle of $E(S)$ is in an element in $\langle x\rangle$, then $\pi_1(E(S))$ is a quotient of ${\mathbb Z}$, because the set of the meridians of the dotted 1-handles is a generator of $\pi_1(E(S))$.
Actually using the abelianization map  $\pi_1(E(S))\overset{ab}{\to}H_1(E(S))={\mathbb Z}$, we conclude that $\pi_1(E(S))$ is isomorphic to ${\mathbb Z}$.
Now this case is ruled out.
Thus, there exists a based meridian $l'\subset E(S)$ such that $y:=[l']$ is conjugate to $x$ but $y\not\in \langle x\rangle$.

In the same way as the proof of Theorem~\ref{non-trivial knot and cord}, from $l'$ we produce a cord in $E(S)$.
Thus, by taking such a cord, we obtain a pochette embedding $e:P\hookrightarrow S^4$.
By moving the 0-framed 2-handle by the process in Figure~\ref{fig:p/p+1surgery4} and \ref{fig:p/p+1surgery5}, we can take the 0-framed 2-handle in the position of the meridian of the $\epsilon$-framed 2-handle.

If the graph for the $n$-fusion is as in Figure~\ref{graph1}.
This is just a schematic picture for the fusion, and the edges stand for connecting 0-framed 2-handles coming from the bands of the ribbon disk.
Actually, in the true picture, the edges should be drawn as some bands and might be linking to several dotted 1-handles.   
For our proof, we may omit these data because sliding the 0-framed 2-handle to dual 2-handles, we can ignore the linking.

We take the two based oriented meridians $m'$ and $l'$ in the positions in the figure.
We suppose that the  below 0-framed 2-handle in the first picture in Figure~\ref{fig:p/p+1surgery4} is attached in the dashed circle in Figure~\ref{graph1} in our situation.
From the 1-handle $k$ linking to $l'$ to the 1-handle $k'$ linking to $m'$, the 0-framed 2-handle can be moved by doing several handle slides and some isotopy.
See Figure~\ref{graph2} for the handle moves.
This also generalizes the moves from the first picture in  Figure~\ref{fig:p/p+1surgery4} to the second picture in Figure~\ref{fig:p/p+1surgery5}.
Hence, we can freely move the 0-framed 2-handle from a dotted 1-handle to another dotted 1-handle.
\begin{figure}[htpb]
\begin{overpic}{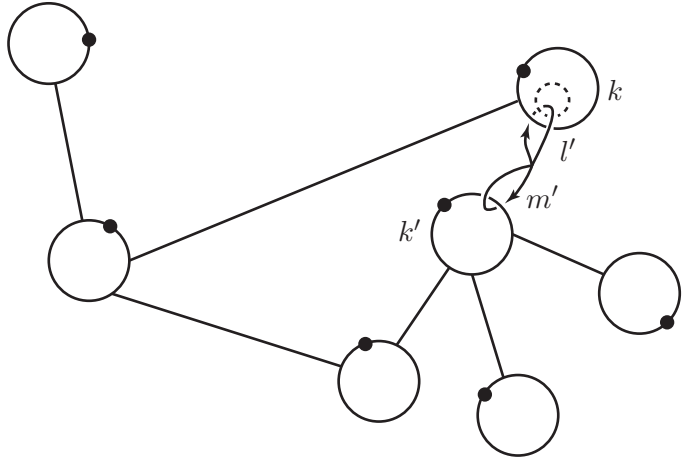}
\put(82,44){$l'$}
\put(77,37){$m'$}
\put(89,53){$k$}
\put(58,32){$k'$}
\end{overpic}
\caption{A graph for the fusion of a ribbon 2-knot.}
\label{graph1}
\end{figure}

\begin{figure}[htpb]
\begin{overpic}{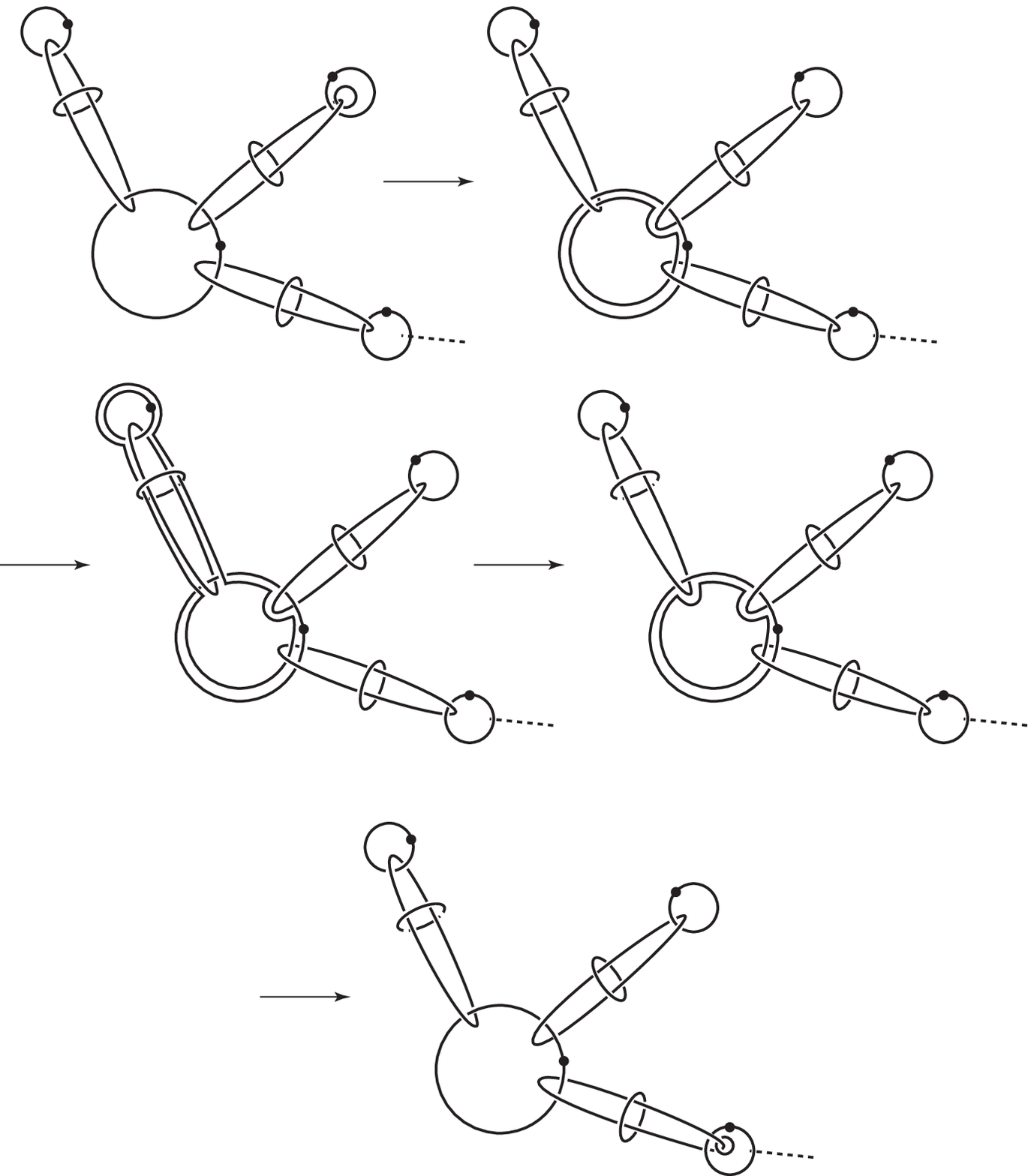}

\put(30,88){$0$}
\put(30,82){handle slide}
\put(0,49){handle slide}
\put(40,49){isotopy}
\put(22,18){isotopy}
\end{overpic}
\caption{Deformations to move the 0-framed meridian in the position.}
\label{graph2}
\end{figure}

By this handle slides, all 0-framed 2-handles corresponding to the dual bands can be moved in the meridians of all 2-handles.
This means that $S^4(e,p/(p+1),\epsilon)$ is the double of a homology 4-ball $H$ without 3-handles.
\end{proof}
If the 2-knot is $n$-fusion ribbon knot, the fundamental group is $S^4(e,p/(p+1),\epsilon)$ has the following form:
$$\langle x_1,\cdots, x_{n+1}|w_1x_{i_1}w^{-1}_1x_{j_1}^{-1},\cdots ,w_nx_{i_n}w_n^{-1}x_{j_n}^{-1},x_s^{-1}(x_rx_s^{-1})^p\rangle,$$
where for $k=1,2,\cdots, n$, $w_k$ is a word in $x_1,\cdots,x_{n+1}$, and the set $\{\{i_k,j_k\}|k=1,\cdots, n\}$ is the set of edges of the graph.
If $H$ in the proof of the previous theorem is a contractible, that is, the fundamental group is trivial, then $S^4(e,p/(p+1),\epsilon)$ is diffeomorphic to $S^4$.
It is unclear whether $H$ is a contractible 4-manifold or not.

Here we prove Theorem~\ref{intro:n-fusion}.
\begin{proof}[Proof of Theorem~\ref{intro:n-fusion}]
Let $S\subset S^4$ be any ribbon 2-knot of $n$-fusion with $G(S)\not\cong {\mathbb Z}$.
Then, we take a non-trivial cord $C$ indicated in Theorem~\ref{non-trivial knot and cord2}.
We obtain a pochette embedding $e:P\hookrightarrow S^4$.
From Theorem~\ref{non-trivial knot and cord2}, if a pochette surgery $S^4(e,g)$ gives a homotopy  4-sphere, then it is a double of a homology 4-ball $B$ without 3-handles.
Here $B$ is a contractible 4-manifold, since $S^4(e,g)$ is simply-connected.
From the method mentioned right after the proof of Theorem~\ref{non-trivial knot and cord}, therefore, $S(e,g)$ is diffeomorphic to the standard $S^4$.
\end{proof}
\section{Questions}
\label{questions}
In this section we raise several questions.
We leave the following problem about Theorem~\ref{non-trivial knot and cord2}.
\begin{que}
Let $S$ be any ribbon 2-knot with $G(S)\not\cong{\mathbb Z}$. 
Does there exist a non-trivial cord $C$ in $E(S)$ such that any nontrivial surgery with respect to the embedding $e:P\hookrightarrow S^4$ with the cord $C$ and the core sphere $S$ yielding a homology 4-sphere gives the standard 4-sphere?
\end{que}

Since pochette surgery is a generalization of Gluck surgery, the triviality of Gluck surgery on any ribbon 2-knot might also hold in the pochette surgery situation.
\begin{que}
Let $S$ be any ribbon 2-knot with $G(S)\not\cong{\mathbb Z}$. 
Suppose that $e:P\hookrightarrow S^4$ is any embedding with $S_e=S$.
Does any pochette surgery $S^4(e,g)$ yielding a homology 4-sphere give the 4-sphere?
\end{que}

Can the diffeomorphisms in the previous section be generalized to cases of any non-trivial core sphere?

\begin{que}
Let $S$ be any 2-knot with $G(S)\not\cong{\mathbb Z}$. 
Then, does there exist a non-trivial cord in $E(S)$ such that any pochette surgery for a pochette embedding $e:P\hookrightarrow S^4$ with the core sphere $S$ is $S^4$ or $Gl(S)$?
\end{que}
Can we construct a homotopy 4-sphere other than $Gl(S)$ by pochette surgery?
Furthermore, we raise two questions in more generalized settings.
\begin{que}
Can a pochette surgery of $S^4$ construct an exotic $S^4$?
\end{que}

Pochette surgery can be generalized to a surgery on a {\it generalized pochette} $P_{a,b}=\natural^a S^1\times D^3\natural^bD^2\times S^2$.
Such a surgery is called an {\it outer surgery} and it is studied by Nakamura in his master thesis \cite{Nak}.
\begin{que}
Can an outer surgery construct an exotic $S^4$?
\end{que}
  
\end{document}